\newcommand{\bC}{\mathbb{C}}
\newcommand{\bH}{\mathbb{H}}
\newcommand{\bR}{\mathbb{R}}
\newcommand{\bZ}{\mathbb{Z}}
\renewcommand{\phi}{\varphi}
\newcommand{\excise}[1]{}
\newcommand{\abs}[1]{\left \vert {#1} \right \vert}
\newcommand{\s}[1]{\left\{ {#1} \right\}}
\newcommand{\parens}[1]{\left( {#1} \right)}
\newcommand{\defeq}{\vcentcolon=}
\newcommand{\angles}[1]{\left \langle {#1} \right \rangle}
\newcommand{\hdist}[2]{\text{dist}_{\text{H}}\parens{{#1},{#2}}}
\newcommand{\Isom}[1]{\textup{Isom}({#1})}
\providecommand{\ar}{\rightarrow}
\newtheorem{theorem}{Theorem}
\newtheorem{thmx}{Theorem}
\newtheorem{proposition}[theorem]{Proposition}
\newtheorem{lemma}[theorem]{Lemma}
\newtheorem{corollary}[theorem]{Corollary}
\theoremstyle{definition}
\newtheorem{definition}[theorem]{Definition}
\newtheorem{example}[theorem]{Example}
\newtheorem{remark}[theorem]{Remark}
\title{Quasi-isometric rigidity for a product of lattices}
\author{Josiah Oh}
\date{\today}
\subjclass[2010]{20F65, 20F69, 20F18, 51F99}
\keywords{quasi-isometry, quasi-isometric rigidity, nilpotent Lie group, negatively curved symmetric space, non-uniform lattice, neutered space, nilcentral extension}
\thanks{The first author was partially supported by the NSF, under grant DMS-1547357}
\begin{document}

\begin{abstract}
We demonstrate quasi-isometric rigidity for the product of a non-uniform rank one lattice and a nilpotent lattice. Specifically, we show that any finitely-generated group quasi-isometric to such a product is, up to finite noise, an extension of a non-uniform rank one lattice by a nilpotent lattice. Furthermore, we show under extra conditions that this extension is nilcentral, a notion which generalizes central extensions to extensions by a nilpotent group. 
\end{abstract}

\maketitle

\section{Introduction}

\subsection{Background}

One theme of geometric group theory is the rich relationship between the algebraic structure of groups and the large-scale geometry of spaces on which they act. By the ``large-scale geometry" of a space we mean the metric structure that is preserved by quasi-isometries, maps which preserve distances up to a controlled error. Gromov proposed in a 1983 ICM address \cite{gromov-icm} a broad research program of studying finitely generated groups as geometric objects and classifying them up to quasi-isometry. One aspect involves identifying instances of quasi-isometric rigidity, the phenomenon which occurs when algebraic properties of a finitely generated group are determined by its large-scale geometry. For example, a celebrated theorem by Gromov \cite{Gromov-polynomial} states that finitely generated groups of polynomial growth are virtually nilpotent (the converse is also true \cite{wolf1968growth}). Since growth rate is invariant under quasi-isometry, it follows that any finitely generated group which is quasi-isometric to a space of polynomial growth necessarily has a nilpotent subgroup of finite index. Thus we have an instance of when the quasi-isometry type of a group determines some of its algebraic structure.

One of the landmark results in research on quasi-isometric rigidity is the complete quasi-isometry classification of lattices in semisimple Lie groups. A large body of work in the 1980s and 1990s by several people over many papers culminated in a general theorem on the rigidity of the class of lattices among all finitely generated groups (see \cite{farb} for a detailed survey). Informally, this theorem states that any group quasi-isometric to a lattice in a semisimple Lie group is almost a lattice in that Lie group. More precisely,

\begin{theorem}[Rigidity of lattices]
    If $\Gamma$ is a finitely generated group quasi-isometric to an irreducible lattice in a semisimple Lie group $G$, then there is a short exact sequence
    \begin{equation*}
    \begin{tikzcd}
        1 \ar[r] & F \ar[r] & \Gamma \ar[r] & \Lambda \ar[r] & 1
    \end{tikzcd}
    \end{equation*}
    where $\Lambda$ is a lattice in $G$, and $F$ is a finite group.
\end{theorem}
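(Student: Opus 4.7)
The plan is to promote the quasi-isometry from $\Gamma$ to $\Lambda$ into an honest homomorphism valued in an isometry group, whose image will be a lattice and whose kernel will be finite. I would first set $X$ to be the model space for $\Lambda$: if $\Lambda$ is uniform, take $X = G/K$ the associated symmetric space; if $\Lambda$ is non-uniform, take $X$ to be a neutered space, i.e.\ the symmetric space with an equivariant family of disjoint open horoballs removed, on which $\Lambda$ acts cocompactly. Fix a quasi-isometry $\phi \colon \Gamma \to X$ and a coarse inverse $\bar\phi$. For each $\gamma \in \Gamma$, the composition $\phi \circ L_\gamma \circ \bar\phi$ is a $(K,C)$-quasi-isometry of $X$ with constants independent of $\gamma$, so left translation conjugates via $\phi$ to a cobounded quasi-action of $\Gamma$ on $X$.

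The central step is to invoke quasi-isometric rigidity of $X$ itself, i.e.\ that every self-quasi-isometry of $X$ lies within bounded sup-distance of an isometry (or, in the non-uniform case, of an element of the commensurator of a fixed arithmetic lattice). This is the combined output of many papers, handled case by case: Pansu for quaternionic and Cayley hyperbolic spaces, Kleiner--Leeb (and Eskin--Farb) for higher-rank symmetric spaces of non-compact type, Schwartz and Farb for non-uniform rank one lattices, and Eskin for non-uniform higher-rank lattices. In the non-uniform case I would then invoke Margulis' commensurator rigidity (in higher rank) or direct analysis (in rank one) to locate the relevant elements inside $G$. Applying rigidity to every map $\phi \circ L_\gamma \circ \bar\phi$ produces a canonical isometry $\rho(\gamma)$; the cocycle identity holds up to uniformly bounded error, so properness of $\textup{Isom}(X)$ forces $\rho \colon \Gamma \to \textup{Isom}(X)$ to be a genuine homomorphism.

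With $\rho$ in hand I would set $F = \ker \rho$ and $\Lambda' = \rho(\Gamma)$. An element of $F$ corresponds via $\phi$ to a quasi-translation of $X$ within bounded distance of the identity; pulling back through $\phi$ and using that $\Gamma$ acts properly on itself forces $F$ to be finite. Meanwhile $\Lambda'$ acts geometrically on $X$ --- cocompactly in the uniform case, with finite covolume on the ambient symmetric space in the non-uniform case --- so the Milnor--Schwarz lemma in the uniform case, or a covolume/arithmeticity argument in the non-uniform case, identifies $\Lambda'$ as a lattice in $G$. This produces the required short exact sequence.

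The hardest step is the quasi-isometric rigidity of $X$, since each family of symmetric spaces demands genuinely different techniques: Carnot--Carath\'eodory differential calculus at the visual boundary for Pansu's rank one non-real hyperbolic case, reconstruction of the Tits building from asymptotic cones for Kleiner--Leeb in higher rank, a combinatorial rigidity argument for deleted horoball patterns in Schwartz's rank one non-uniform work, and coarse differentiation for Eskin's higher-rank non-uniform case. A secondary subtlety arises in the non-uniform case, where $\rho$ a priori lands only in the commensurator of a fixed lattice; extracting an honest lattice then requires Margulis' commensurator theorem together with careful bookkeeping of how the image sits inside $G$.
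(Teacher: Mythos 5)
The paper states this theorem purely as background, citing Farb's survey, and offers no proof of its own; so your outline must be judged on its own merits. Your overall shape --- conjugate the left-translation action through $\phi$ to a cobounded quasi-action on a model space $X$, invoke rigidity of $X$ to get a homomorphism $\rho : \Gamma \to \textup{Isom}(X)$, then show $\ker\rho$ is finite and $\rho(\Gamma)$ is a lattice --- is the right one, and matches what is done in the cases the present paper actually uses (Schwartz's theorem and its descendants).

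There is, however, a genuine gap in your central step. You assert that ``every self-quasi-isometry of $X$ lies within bounded sup-distance of an isometry,'' and cite Pansu and Kleiner--Leeb. That statement is true for quaternionic and Cayley hyperbolic spaces and for irreducible higher-rank symmetric spaces, but it is \emph{false} for $X = \bH^n_\bR$ and $X = \bH^n_\bC$: there, $\textup{QI}(X)$ is identified with the group of quasi-conformal self-maps of the visual boundary, which is strictly larger than the M\"obius group, and a generic quasi-isometry of $X$ is \emph{not} close to any isometry. Your mechanism of applying rigidity separately to each map $\phi \circ L_\gamma \circ \bar\phi$ therefore does not get started for uniform lattices in $\textup{SO}(n,1)$ or $\textup{SU}(n,1)$. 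In those cases the argument is structurally different: one first passes to a uniformly quasi-conformal \emph{group} action of $\Gamma$ on the boundary sphere and then shows that this entire group action is conjugate by a single quasi-conformal map to a conformal action --- this is Sullivan--Tukia for $\bH^n_\bR$, $n\ge 3$ (with Tukia, Gabai, Casson--Jungreis handling $n=2,3$ via convergence groups) and Chow for $\bH^n_\bC$. The resulting $\rho$ does not arise by straightening each $\phi \circ L_\gamma \circ \bar\phi$ individually; it arises from a global conjugation, and this distinction cannot be glossed over. Separately, a smaller inaccuracy: you say ``properness of $\textup{Isom}(X)$ forces $\rho$ to be a genuine homomorphism.'' What is actually needed is that two isometries of $X$ at finite sup-distance must coincide (true for symmetric spaces of noncompact type, since they induce the same boundary homeomorphism); properness of the isometric action only gives compactness of the set of isometries displacing a point a bounded amount, which is weaker than what the cocycle argument requires.
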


One of the major breakthroughs leading to this general classification was the work of Schwartz \cite{Schwartz} on non-uniform lattices in rank one semisimple Lie groups. These Lie groups agree, up to index 2, with the isometry groups of the negatively curved symmetric spaces: real, complex, quaternionic hyperbolic space, and the Cayley hyperbolic plane.

\begin{theorem}[Schwartz]\label{Schwartz-theorem}
    Let $X$ be a negatively curved symmetric space other than the real hyperbolic plane $\bH^2$. If $\Gamma$ is a finitely generated group quasi-isometric to a non-uniform lattice $\Lambda$ in $\Isom{X}$, then there exists a short exact sequence
    \[
    \begin{tikzcd}
        1 \ar[r] & F \ar[r] & \Gamma \ar[r] & \Lambda' \ar[r] & 1.
    \end{tikzcd}
    \]
    where $\Lambda' \le \Isom{X}$ is a non-uniform lattice commensurable to $\Lambda$, and $F$ is a finite group.
\end{theorem}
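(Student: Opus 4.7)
The plan is to replace $\Lambda$ with a cocompact geometric model and then extract isometries from quasi-isometries of that model. By the thick-thin decomposition of $\Lambda \backslash X$, there is a $\Lambda$-invariant family of disjoint open horoballs $\mathcal{H} = \{H_i\}$ (one $\Lambda$-orbit per cusp) whose complement $\Omega := X \setminus \bigcup_i H_i$ is the Schwartz \emph{neutered space}. The $\Lambda$-action on $\Omega$ is proper and cocompact, so Milnor--Schwarz gives $\Lambda \qi \Omega$ and hence $\Gamma \qi \Omega$. Transporting the left translation action of $\Gamma$ on itself across this quasi-isometry produces a cobounded quasi-action $\Gamma \curvearrowright \Omega$ with uniform constants.

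The heart of the argument is to show that every $(K,C)$-quasi-isometry $\phi : \Omega \to \Omega$ lies within bounded distance of the restriction to $\Omega$ of an isometry of $X$ that permutes $\mathcal{H}$. I would carry this out in three sub-steps. First, establish \emph{horosphere rigidity}: the boundary horospheres $\partial H_i$ are coarsely recognizable in $\Omega$---for instance as the ``boundary components'' of the neutered space, or via their polynomial growth contrasted with the ambient hyperbolic geometry---and hence $\phi$ coarsely permutes them. Second, extend $\phi$ to a boundary homeomorphism $\partial \phi : \partial X \to \partial X$. Since $X$ is Gromov hyperbolic, $\partial \phi$ is quasi-conformal with respect to the visual (Carnot--Carath\'eodory) metric, and by the first sub-step it preserves the set of parabolic fixed points of $\Lambda$, which is dense in $\partial X$. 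Third, invoke rank-one boundary rigidity: Tukia's theorem for $\bH^n$ with $n \ge 3$, and Pansu's differentiation theorem and its consequences (work of Chow and others) for the complex, quaternionic, and Cayley hyperbolic spaces. These force $\partial \phi$ to be conformal, hence to be the boundary extension of an isometry $g \in \Isom{X}$, and $g$ necessarily permutes $\mathcal{H}$.

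Given this, the quasi-action of $\Gamma$ on $\Omega$ determines a map $\rho : \Gamma \to \Isom{X}$ sending $\gamma$ to the unique isometry at bounded distance from its quasi-action. The quasi-action axioms, together with uniqueness up to bounded distance, force $\rho$ to be a homomorphism. Its kernel $F$ consists of elements whose quasi-action is uniformly bounded, which is finite by properness. The image $\Lambda' := \rho(\Gamma)$ is a discrete subgroup of $\Isom{X}$ stabilizing $\mathcal{H}$ and acting cocompactly on $\Omega$, so it is a non-uniform lattice. Since $\Lambda$ and $\Lambda'$ share their parabolic fixed points in $\partial X$, Margulis-type arguments identify $\Lambda'$ as commensurable to $\Lambda$.

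The main obstacle is the horosphere rigidity sub-step. One must rule out the possibility that $\phi$ smears a horosphere across the thick part of $\Omega$, which requires a careful analysis of the combinatorial pattern in which the horoballs sit at infinity together with the ``depth'' structure of $\Omega$. This is precisely the step where $\bH^2$ fails: the one-dimensional boundary admits far too many quasi-symmetric self-maps, permitting arbitrary reshuffling of horoballs with no available rigidity, which is why the theorem excludes $\bH^2$.
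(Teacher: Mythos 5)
The paper does not prove this result; it is stated as background (attributed to Schwartz) and used as a black box, so there is no ``paper's own proof'' to compare against literally. Your proposal is a reasonable reconstruction of Schwartz's published argument in its broad outline --- neutered space, induced quasi-action, extraction of isometries, finite kernel, lattice image --- and it matches the components the paper does import (quasi-isometries of $B$ coarsely permute horospheres; quasi-isometries of $B$ lie at bounded distance from isometries of $X$). However, two steps in your sketch have genuine gaps.

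First, the passage from a quasi-isometry $\phi$ of the neutered space $\Omega$ to a boundary homeomorphism of $\partial X$ is not automatic. The neutered space with its induced path metric is \emph{not} Gromov hyperbolic --- its peripheral horospheres are undistorted flat (resp.\ nilpotent) manifolds --- so its Gromov boundary is not $\partial X$, and $\phi$ has no a priori boundary extension. Schwartz needs a separate \emph{ambient extension} lemma: having shown that $\phi$ coarsely permutes horospheres, one extends $\phi$ across each deleted horoball (coning off, using the coarsely-matched horosphere map) to obtain a genuine quasi-isometry $\hat\phi$ of all of $X$, and only $\hat\phi$ admits the quasi-conformal boundary extension you want. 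Your sketch skips this entirely. Second, your invocation of Tukia's theorem in the real-hyperbolic case is the wrong tool: Tukia's theorem says a uniformly quasi-conformal \emph{group} action on $S^{n-1}$ ($n\geq 3$) is conjugate by a quasi-conformal map to a conformal action, but what is required here is that each individual boundary map $\partial\hat\phi$ is itself conformal (not merely conjugate to a conformal map). Schwartz gets this via his zooming/scattering argument: pre- and post-composing with isometries that blow up near parabolic points (and exploiting the coarse permutation of horospheres to control the limit) shows $\partial\hat\phi$ has a conformal differential on the dense set of parabolic points, and a quasi-conformal self-map of $S^{n-1}$, $n\geq 3$, with that property is M\"obius. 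The Pansu route you cite for the complex, quaternionic, and Cayley cases is correct, since there every quasi-conformal self-map of the boundary is automatically conformal. Finally, the commensurability of $\Lambda$ and $\Lambda'$ at the end is asserted via unspecified ``Margulis-type arguments''; in Schwartz's proof it comes from showing $\Lambda$ and $\Lambda'$ admit comparable neutered spaces and generate a discrete group containing both with finite index, which is a separate (if not difficult) step worth flagging.
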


In the case of $X = \bH^2$, non-uniform lattices in $\Isom{\bH^2} = \text{PSL}(2,\bR)$ are virtually the fundamental groups of complete hyperbolic surfaces with finitely many punctures, and hence are virtually free. Any group quasi-isometric to a virtually free group is also virtually free, and every free group can be realized as a non-uniform lattice in $\text{PSL}(2,\bR)$, so quasi-isometric rigidity does hold in this case. However, the additional conclusion of commensurability fails. One reason for this failure is that commensurability preserves arithmeticity, and $\text{PSL}(2,\bR)$ contains both arithmetic and non-arithmetic non-uniform lattices. But since these lattices are virtually free, they are quasi-isometric to each other.

Following Schwartz, lattices such as $\Lambda$ and $\Lambda'$ shall from now on be called \textit{non-uniform rank one lattices}. Then an informal summary might be: any group quasi-isometric to a non-uniform rank one lattice is almost a commensurable non-uniform rank one lattice.

In an extensive study on high-dimensional graph manifolds \cite{FLS}, Frigerio--Lafont--Sisto prove, among many other things, a quasi-isometric rigidity result for products of the form $\pi_1(M) \times \bZ^d$, where $M$ is a complete non-compact finite-volume hyperbolic $m$-manifold, $m\ge3$.

\begin{theorem}[Frigerio--Lafont--Sisto]\label{FLS}
    If $\Gamma$ is a finitely generated group quasi-isometric to $\pi_1(M) \times \bZ^d$, then there exist short exact sequences
    \begin{equation*}
    \begin{tikzcd}
        1 \ar[r] & \bZ^d \ar[r,"j"] & \Gamma' \ar[r] & \Delta \ar[r] & 1
    \end{tikzcd}
    \end{equation*}
    \begin{equation*}
    \begin{tikzcd}
        1 \ar[r] & F \ar[r] & \Delta \ar[r] & \pi_1(M') \ar[r] & 1
    \end{tikzcd}
    \end{equation*}
    where $\Gamma' \le \Gamma$ has finite index, $M'$ is a finite-sheeted covering of $M$, $\Delta$ is a group, and $F$ is a finite group. Moreover, $j(\bZ^d)$ is contained in the center of $\Gamma'$. In other words, $\Gamma$ is virtually a central extension by $\bZ^d$ of a finite extension of $\pi_1(M')$.
\end{theorem}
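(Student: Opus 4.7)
The plan is to work with the model space $X = Y \times \bR^d$, where $Y$ is the neutered space obtained from $\bH^m$ by equivariantly removing the interiors of a $\pi_1(M)$-invariant family of disjoint horoballs, one centered at each parabolic fixed point. Then $\pi_1(M) \times \bZ^d$ acts properly cocompactly on $X$, so any $\Gamma$ quasi-isometric to $\pi_1(M) \times \bZ^d$ quasi-acts on $X$. The proof reduces to showing that this quasi-action preserves the product decomposition up to bounded error, and then applying Schwartz's rigidity result (\ref{Schwartz-theorem}) to the $Y$-factor alongside a Milnor--Schwarz type argument on the $\bR^d$-factor.

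First I would prove a product rigidity statement for $X$: every self-quasi-isometry $\Phi$ of $X$ is a bounded distance from a product of self-quasi-isometries of $Y$ and of $\bR^d$. The key input is a coarse description of the maximal flats in $X$. In $Y$ the maximal flats are (coarsely) the horoball boundaries $\partial B \cong \bR^{m-1}$, so in $X$ the $(m-1+d)$-dimensional flats are the products $\partial B \times \bR^d$. Any self-quasi-isometry must permute these flats. Inside a fixed such flat, the $\bR^d$-direction is intrinsically distinguished: it is the unique parallelism class along which geodesics escape every horoball, while $\partial B$ directions remain pinned to a single boundary. Promoting this coarse direction-splitting to an actual decomposition $\Phi \simeq (\phi_Y, \phi_{\bR^d})$ is then a standard coarse-geometry exercise in the spirit of Kapovich--Leeb and Eskin--Farb factor detection.

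Next I would combine the splitting with the two single-factor rigidities. Schwartz's theorem applies to $\phi_Y$, forcing the $Y$-projection of the quasi-action to be, modulo finite noise, an action by elements of $\Isom{\bH^m}$ commensurating $\pi_1(M)$; passing to a finite-index subgroup $\Gamma' \le \Gamma$ yields a homomorphism $\Gamma' \to \pi_1(M')$ for a finite cover $M' \to M$. For the $\bR^d$-factor, quasi-isometries of $\bR^d$ are at bounded distance from affine maps, and the quasi-stabilizer in $\Gamma'$ of a $Y$-fiber is virtually $\bZ^d$ by a Milnor--Schwarz argument on the fiber. Centrality of this $\bZ^d$ follows from the product form of the quasi-action: since the $\bR^d$-factor is pointwise fixed by the $Y$-projection of every $\gamma \in \Gamma'$, the distinguished $\bZ^d$ commutes with $\Gamma'$ up to bounded error, and after one more finite-index passage becomes genuinely central. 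Fitting the two short exact sequences together delivers the stated structure.

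The hard part will be the product rigidity step. Unlike products of symmetric spaces, where Kleiner--Leeb or Eskin--Farb techniques apply directly, $Y$ is not CAT(0) near its cusps and the horoball directions are themselves Euclidean, so one must carefully rule out ``mixing'' between the peripheral Euclidean factor $\bR^{m-1}$ and the external $\bR^d$-factor. This requires combining Schwartz's permutation-of-horoballs machinery with an analysis of how a quasi-isometry restricts to each peripheral flat, using that $Y$ is Gromov-hyperbolic in its thick part to anchor the $Y$-direction relative to the $\bR^d$-direction.
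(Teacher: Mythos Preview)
The paper does not itself prove Theorem~\ref{FLS}; it is quoted from \cite{FLS} and then generalized in Theorem~\ref{Theorem A}. So there is no in-paper proof to match literally, but your outline can be compared against the paper's proof of Theorem~\ref{Theorem A}, which the author says follows the FLS argument closely.

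The overall architecture agrees: quasi-act on $B\times\bR^d$, split off the $B$-factor, apply Schwartz to get a homomorphism $\theta\colon\Gamma\to\Isom{\bH^m}$, then analyze image and kernel. The substantive divergence is in the product-rigidity step. You propose to establish by hand that self-quasi-isometries of $Y\times\bR^d$ split as products, by classifying maximal flats $\partial B\times\bR^d$ and arguing that inside each such flat the $\bR^d$-direction is intrinsically recognizable. The paper does not attempt this: it invokes the Kapovich--Kleiner--Leeb theorem \cite{KKL} as a black box to conclude that quasi-isometries of $B\times N$ project, up to bounded error, to quasi-isometries of $B$. Your route would be more self-contained, but the specific criterion you give for distinguishing the $\bR^d$-direction (``geodesics escape every horoball, while $\partial B$ directions remain pinned to a single boundary'') does not work as stated: a geodesic in the $\bR^d$-direction through a point of $\partial B\times\bR^d$ also stays on that single horosphere forever. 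The actual invariant is more subtle (the $\bR^d$-direction is the common parallel factor shared by \emph{all} the peripheral flats), and making this coarse is precisely what the KKL machinery is built for.

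Your centrality argument has a genuine gap. From ``$\bZ^d$ commutes with $\Gamma'$ up to bounded error'' you cannot pass to genuine centrality just by going to finite index: the conjugation action of $\Gamma'$ on $\bZ^d$ is an honest homomorphism $\Gamma'\to\mathrm{GL}(d,\bZ)$, and nothing you have said bounds its image. One must analyze this representation directly and show it has finite image. In the paper's generalization (Theorem~\ref{Theorem B}) this step requires superrigidity and extra hypotheses on $X$; in the original FLS setting it is handled by a separate argument that your sketch does not supply.
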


Observe that the case $d = 0$ is covered by Schwartz' theorem, and indeed, the proof of this theorem applies many of the ideas and results from \cite{Schwartz}.

\subsection{Main results}

Our main contribution is a generalization of Theorem \ref{FLS} to products $\Lambda \times L$, where $\Lambda$ is a non-uniform rank one lattice and $L$ is a lattice in a simply connected nilpotent Lie group. From now on, a lattice such as $L$ is called a \textit{nilpotent lattice}. Our first theorem says that up to finite noise, any group quasi-isometric to $\Lambda \times L$ is an extension of a non-uniform rank one lattice commensurable to $\Lambda$ by a nilpotent lattice quasi-isometric to $L$.

\begin{thmx}\label{Theorem A}
    Let $X \ne \bH^2$ be a negatively curved symmetric space. Let $\Lambda$ be a non-uniform lattice in $\textup{Isom}(X)$ and $L$ be a nilpotent lattice. If $\Gamma$ is a finitely generated group quasi-isometric to $\Lambda \times L$, then there exist short exact sequences
    \begin{equation}\label{virtually nilcentral extension}
    \begin{tikzcd}
        1 \ar[r] & L' \ar[r] & \Gamma' \ar[r] & \Delta \ar[r] & 1,
    \end{tikzcd}
    \end{equation}
    \begin{equation*}
    \begin{tikzcd}
        1 \ar[r] & F \ar[r] & \Delta \ar[r] & \Lambda' \ar[r] & 1.
    \end{tikzcd}
    \end{equation*}
    where $\Gamma' \le \Gamma$ and $\Lambda' \le \Lambda$ have finite index, $L'$ is a nilpotent lattice quasi-isometric to $L$, $\Delta$ is a group, and $F$ is a finite group.
\end{thmx}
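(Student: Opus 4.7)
Let $N$ denote the simply connected nilpotent Lie group of which $L$ is a cocompact lattice, and let $\Omega \subset X$ be a $\Lambda$-invariant neutered space obtained by removing a $\Lambda$-invariant family of disjoint open horoballs, so that $\Lambda$ acts properly cocompactly on $\Omega$. By Milnor--\v{S}varc, $\Lambda \times L \qi \Omega \times N$, so I may replace the hypothesis by ``$\Gamma$ is quasi-isometric to $\Omega \times N$''. Any quasi-isometry $\Gamma \to \Omega \times N$ produces a cobounded quasi-action $\rho$ of $\Gamma$ on $\Omega \times N$ with uniform constants, and the strategy is to split $\rho$, up to finite index and finite kernel, into a quasi-action on $\Omega$ (to which Theorem~\ref{Schwartz-theorem} can be applied) and a ``fiberwise'' quasi-action on $N$.

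\emph{Detecting the peripheral slabs.} The boundary horospheres $\{B_i\}$ of $\Omega$ are each isometric to a simply connected nilpotent Lie group with a left-invariant metric, hence have polynomial volume growth; consequently each ``peripheral slab'' $B_i \times N \subset \Omega \times N$ is a simply connected nilpotent Lie group of polynomial growth, while the ambient space $\Omega \times N$ has exponential growth. I would first show, adapting the Schwartz-style horoball-permutation argument, that $\rho$ permutes the collection $\{B_i \times N\}$ up to uniformly bounded Hausdorff error. The slabs are characterized intrinsically as the maximal coarse subsets of $\Omega \times N$ that are quasi-isometric to a simply connected nilpotent Lie group of the correct Carnot type and that separate $\Omega \times N$ into unbounded pieces; both properties are quasi-isometry invariants and the required growth/Carnot data distinguishes them from any other nilpotent coarse subsets.

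\emph{Descent to $\Omega$ and application of Schwartz.} The combinatorial permutation action of $\Gamma$ on $\{i\}$ matches that of $\Lambda$ on its cusps. After passing to a finite-index subgroup $\Gamma' \le \Gamma$, every slab $B_i \times N$ is individually preserved. I would then use Pansu's theorem on each slab to show that the induced self quasi-isometries are bounded distance from graded automorphisms of the Carnot Lie algebra of $B_i \times N$, and that these automorphisms preserve the direct-summand decomposition corresponding to the product factors $B_i$ and $N$. This yields coarsely well-defined $N$-fibers, hence a $\Gamma'$-equivariant (up to bounded error) coarse projection $\Omega \times N \ar \Omega$. Applying Theorem~\ref{Schwartz-theorem} to the resulting cobounded quasi-action of $\Gamma'$ on $\Omega$ yields a homomorphism $\Gamma' \ar \Delta$ with finite kernel $F$ and image a non-uniform lattice $\Lambda'$ commensurable with $\Lambda$; this produces the second exact sequence. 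The first exact sequence is then obtained by defining $L'$ as the kernel of the composite $\Gamma' \ar \Delta \ar \Lambda'$ (after absorbing $F$), and running a Milnor--\v{S}varc argument on the action of this kernel on a single $N$-fiber to identify $L'$ as a nilpotent lattice quasi-isometric to $L$.

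\emph{Main obstacle.} The crux is the splitting of a self quasi-isometry of $B_i \times N$ into a product, i.e.\ the claim that the $N$-fibration is coarsely preserved. Pansu's theorem provides an induced automorphism of the graded Lie algebra $\mathfrak{b}_i \oplus \mathfrak{n}$, but such an automorphism need not respect the direct-sum decomposition when the two summands share common weights in their Carnot gradings. Overcoming this requires an external geometric constraint coming from the ambient space $\Omega \times N$ --- roughly, that the $B_i$-directions can be detected from the interior of $\Omega \times N$ (via asymptotic cones or Morse-type quasi-geodesics escaping into $\Omega$), while the $N$-directions cannot. Making this rigorous, likely via a boundary-at-infinity argument in the style of Mostow combined with quasi-isometric rigidity for nilpotent groups, is the technical heart of the proof and presumably also the place where the extra hypotheses needed for the nilcentral strengthening later in the paper must be invoked.
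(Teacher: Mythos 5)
Your overall architecture --- project the quasi-action on $\Omega\times N$ down to a quasi-action on $\Omega$, apply Schwartz to get a homomorphism to $\Isom{X}$ whose image is a lattice commensurable with $\Lambda$, and identify the kernel with a nilpotent lattice via a fiber argument --- matches the paper's. But the projection step, which you candidly flag as the ``main obstacle,'' is a genuine gap, and the paper fills it differently. It does not attempt a hands-on splitting of quasi-isometries of the peripheral slabs $B_i\times N$ via Pansu's theorem. Instead it invokes Kapovich--Kleiner--Leeb (\cite{KKL}, Theorem~B) on quasi-isometries of products, which directly produces a uniform constant $D$ and quasi-isometries $\psi(\gamma)\colon B\to B$ making the projection diagram commute up to error $D$. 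This sidesteps the Carnot-grading ambiguity entirely: one never needs to know that a quasi-isometry of a single slab respects the $B_i\times N$ direct-sum decomposition, because KKL works with the whole product $B\times N$, exploiting that the neutered rank-one factor $B$ and the nilpotent factor $N$ are of incomparable coarse type. Your proposed route runs into exactly the difficulty you anticipate --- when $\mathfrak{b}_i$ and $\mathfrak{n}$ share weights, a graded automorphism of $\mathfrak{b}_i\oplus\mathfrak{n}$ need not preserve the splitting --- and you give no mechanism to overcome it. Moreover, the extra hypotheses of Theorem~\ref{Theorem B} (quaternionic/Cayley, $\dim\Isom{X}>\Sigma(L')$) are \emph{not} used anywhere in the proof of Theorem~\ref{Theorem A}, so they are not the rescue you gesture toward; they enter only later, via superrigidity, to get nilcentrality.

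A secondary issue is the bookkeeping of the two exact sequences. You claim the Schwartz-derived homomorphism $\Gamma'\to\Delta$ has finite kernel $F$, with $L'$ obtained afterward by ``absorbing $F$.'' This inverts the actual structure: the homomorphism $\theta\colon\Gamma\to\Isom{X}$ has kernel quasi-isometric to $N$ (Proposition~\ref{kernel qi to N}), not finite. By Gromov's polynomial growth theorem $\ker\theta$ is virtually nilpotent; one then selects a finite-index torsion-free characteristic subgroup $L'\le\ker\theta$ (made into a lattice in a simply connected nilpotent Lie group via Malcev), and the finite group is $F=\ker\theta/L'$. So $F$ is a by-product of passing to a torsion-free subgroup of $\ker\theta$, not a kernel of the Schwartz map.
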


The general outline of the proof is similar to that of the proof of Theorem \ref{FLS}. First, the quasi-isometry between $\Gamma$ and $\Lambda \times L$ induces a quasi-action of $\Gamma$ on $B \times N$, where $B \subset X$ is the neutered space associated to $\Lambda$, and $N$ is the simply connected nilpotent Lie group in which $L$ is a lattice. A theorem of Kapovich--Kleiner--Leeb \cite{KKL} guarantees that quasi-isometries $B \times N \to B \times N$ project to quasi-isometries $B \to B$, up to bounded error. Thus the quasi-action of $\Gamma$ on $B\times N$ induces a quasi-action of $\Gamma$ on $B$. A key result in \cite{Schwartz} is that quasi-isometries of the neutered space $B$ have finite distance (with respect to the sup norm) from isometries of $X$. Thus we obtain a homomorphism $\theta : \Gamma \to \text{Isom}(X)$, and we show that the image $\text{im} \hspace{.8mm} \theta$ is a non-uniform lattice commensurable to $\Lambda$. The action $\theta : \Gamma \to \text{Isom}(X)$ came from a quasi-action on $B$, which itself was coarsely projected from a quasi-action on $B \times N$. Hence we are able to show that the kernel of $\theta$ is quasi-isometric to $N$. Then $\Gamma = \text{im} \hspace{.8mm} \theta / \ker \theta$, so we pass to finite-index subgroups as necessary to obtain the desired short exact sequences.

Theorem \ref{FLS} asserts that $\bZ^d$  can be made central in the group extension. In our more general setting, however, the extension is by a nilpotent group. So we define the notion of a nilcentral extension, analogous to that of a central extension, and find conditions which are sufficient to guarantee that (\ref{virtually nilcentral extension}) may be taken to be a nilcentral extension. Given a nilpotent group $G$ with upper central series $1 = Z_0 \lhd Z_1 \lhd \dots \lhd Z_n = G$, define 
\[
    \Sigma(G) \defeq \max_i \text{rank}(Z_{i+1}/Z_i).
\]

\begin{thmx}\label{Theorem B}
    Assume the hypotheses of Theorem \ref{Theorem A} and let $L'$ be the nilpotent lattice obtained from the conclusion of the theorem. If $X$ is either quaternionic hyperbolic space or the Cayley hyperbolic plane, and $\dim\textup{Isom}(X) > \Sigma(L')$, then the group extension (\ref{virtually nilcentral extension}) in the conclusion of Theorem \ref{Theorem A} is virtually nilcentral.
\end{thmx}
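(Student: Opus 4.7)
The strategy is to extract, from the conjugation action of $\Gamma'$ on $L'$, a family of linear representations on the upper central quotients of $L'$ and to use the strong rigidity available for lattices in $\mathrm{Sp}(n,1)$ and $F_{4(-20)}$ to show that all of these representations become trivial after passing to a finite-index subgroup. The upper central series $1 = Z_0 \lhd Z_1 \lhd \dots \lhd Z_k = L'$ is characteristic in $L'$, so the conjugation action $\Gamma' \to \mathrm{Aut}(L')$ preserves each $Z_i$ and induces actions on the quotients $V_i := Z_{i+1}/Z_i$, finitely generated abelian groups of rank $r_i \le \Sigma(L')$. Since inner automorphisms act trivially on each $V_i$, these actions descend to homomorphisms $\rho_i : \Delta \to \mathrm{Aut}(V_i)$; quotienting $V_i$ by its torsion yields $\bar\rho_i : \Delta \to \mathrm{GL}(r_i, \bZ)$. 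By the definition of nilcentrality, the extension (\ref{virtually nilcentral extension}) is nilcentral precisely when every $\bar\rho_i$ is trivial, so the problem reduces to finding a finite-index subgroup $\Delta_0 \le \Delta$ on which all $\bar\rho_i$ vanish.

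Using the second short exact sequence from Theorem \ref{Theorem A}, I would replace $\Delta$ by a finite-index subgroup mapping isomorphically (via Selberg's lemma) onto a torsion-free finite-index subgroup $\Lambda'' \le \Lambda'$, so that each $\bar\rho_i$ becomes a homomorphism from a non-uniform rank-one lattice $\Lambda'' \le \mathrm{Isom}(X)$ into $\mathrm{GL}(r_i, \bZ)$. Because $X$ is quaternionic hyperbolic or the Cayley hyperbolic plane, $\mathrm{Isom}(X)^0$ is $\mathrm{Sp}(n,1)$ or $F_{4(-20)}$, and the superrigidity theorems of Corlette (cocompact case) and Gromov--Schoen (non-cocompact case) apply: each $\bar\rho_i$ either has precompact image in $\mathrm{GL}(r_i, \bR)$ or extends to a rational representation of $\mathrm{Isom}(X)^0$ into $\mathrm{GL}(r_i, \bR)$. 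In the first case, the image lies in a compact subgroup of $\mathrm{GL}(r_i, \bR)$ and, being contained in the discrete subgroup $\mathrm{GL}(r_i, \bZ)$, must be finite.

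The second case is precisely where the hypothesis $\dim \mathrm{Isom}(X) > \Sigma(L')$ is used: an extension would provide a nontrivial linear representation of the simple Lie group $\mathrm{Isom}(X)^0$ on $\bR^{r_i}$ with $r_i \le \Sigma(L') < \dim \mathrm{Isom}(X)$. Simplicity of $\mathrm{Isom}(X)^0$ forces such a representation to be faithful, giving an embedding of the Lie algebra of $\mathrm{Isom}(X)^0$ into $\mathfrak{gl}(r_i, \bR)$; combining this with the classification of low-dimensional representations of $\mathrm{Sp}(n,1)$ and $F_{4(-20)}$ and the specific form of the dimension bound rules out such an embedding, so $\bar\rho_i$ must have finite image. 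Intersecting the (finite-index) kernels $\bigcap_i \ker \bar\rho_i$ gives a finite-index subgroup of $\Lambda''$, whose pullback to $\Delta$ and then to $\Gamma'$ produces the desired nilcentral extension.

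\textbf{Main obstacle.} The delicate step is the representation-theoretic argument in the last paragraph: one must verify that $\dim \mathrm{Isom}(X) > \Sigma(L')$ is genuinely strong enough to preclude any nontrivial extension $\mathrm{Isom}(X)^0 \to \mathrm{GL}(r_i, \bR)$ with $r_i \le \Sigma(L')$, which requires using the structure of $\mathrm{Sp}(n,1)$ and $F_{4(-20)}$ rather than a generic dimension comparison. A secondary technical point is to verify that Corlette-type superrigidity is available in the non-uniform setting for these rank-one groups (citing the appropriate extension, e.g.\ Gromov--Schoen), and that passing through the finite kernel $F$ in Theorem \ref{Theorem A} can be done consistently with the reduction to $\Lambda''$.
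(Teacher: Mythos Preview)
Your outline matches the paper's: reduce virtual nilcentrality to showing that the conjugation representations of $\Delta$ on the successive central quotients $Z_{i+1}/Z_i\cong\bZ^{d_i}$ have finite image, pass to a genuine lattice in $\textup{Isom}(X)$, apply superrigidity, and use the dimension hypothesis to rule out a nontrivial extended representation. Two implementation points differ. First, the paper invokes the Fisher--Hitchman ``almost extends'' formulation of superrigidity for property~(T) groups rather than the Corlette/Gromov--Schoen dichotomy you cite: this immediately writes the representation as $\pi_1\pi_2$ with $\pi_1:\textup{Isom}(X)\to\textup{GL}(d,\bR)$ continuous and $\pi_2$ bounded, so one avoids any analysis of Zariski closures inside a non-semisimple target. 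Second, rather than trying to split off $F$ inside $\Delta$ (your step here would need residual finiteness of $\Delta$, not just Selberg applied to $\Lambda'$, since the preimage of any $\Lambda''\le\Lambda'$ still contains $F$), the paper applies Selberg to the \emph{image} of $\Delta$ in $\textup{GL}(d,\bR)$ and observes that the finite group $F$ must die once one passes to a torsion-free subgroup there. On your ``main obstacle'': the paper does not invoke the classification of small representations of $\textup{Sp}(n,1)$ or $F_{4(-20)}$; it argues directly that the hypothesis $\dim\textup{Isom}(X)>d$ makes the simple group $\textup{Isom}(X)$ too large to map nontrivially into $\textup{GL}(d,\bR)$, and concludes $\pi_1$ is trivial from that alone.
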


Our proof of this theorem relies on a form of Margulis--Corlette--Gromov--Schoen superrigidity for Lie groups with Kazhdan's property (T). The precise statement we apply is in \cite{fisher2012strengthening}.

\section{Preliminaries}

Let $(X, d_X)$ and $(Y, d_Y)$ be metric spaces, and let $k\ge1$ and $c\ge0$ be real numbers. A map $f:X\to Y$ is a $(k,c)$\textit{-quasi-isometric embedding} if for all $a,b \in X$,
\[
\frac{1}{k} d_X(a,b) - c \leq d_Y(f(a), f(b)) \leq k d_X(a,b) + c.
\]
A $(k,c)$-quasi-isometric embedding $f$ is a $(k,c)$-\textit{quasi-isometry} if there is a $(k,c)$-quasi-isometric embedding $g : Y \to X$ such that $d_X(x, (g\circ f)(x)) \le c$ for all $x \in X$, and $d_Y(y, (f\circ g)(y)) \le c$ for all $y\in Y$. Such a map $g$ is a \textit{quasi-inverse} of $f$. For maps $h_1,h_2 : X \to Y$, let $d_Y(h_1,h_2)$ denote $\sup_{x \in X}d_Y(h_1(x),h_2(x))$. If $d_Y(h_1,h_2) < \infty$ then we say that $h_1$ \textit{has finite distance from} $h_2$. With this notation, $g$ is a quasi-inverse of $f$ if and only if $d_X(\text{id}_X, g \circ f) \le c$ and $d_Y(\text{id}_Y, f \circ g) \le c$. A $(k,c)$-quasi-isometric embedding is a $(k',c')$-quasi-isometry for some $k'\ge1$ and $c'\ge 0$ if and only if it is \textit{coarsely surjective}, that is, its image is $r$-dense for some $r \ge 0$. A map $f \colon X \to Y$ is a \textit{quasi-isometry} between $X$ and $Y$ if it is a $(k,c)$-quasi-isometry for some $k \ge 1, c \ge 0$. Two metric spaces $X$ and $Y$ are \textit{quasi-isometric} if there exists a quasi-isometry between them. Observe that a composition of quasi-isometries is again a quasi-isometry.

Let $G$ be a group with a finite, symmetric generating set $S$. For $g \in G$, let $\abs{g}_S$ denote the length of a shortest word representing $g$ with letters in $S$. Then $G$ is endowed with a \textit{word metric} $d_S$ defined by $d_S(g,h) = \abs{g^{-1}h}_S$. Note that left multiplication of $G$ on itself is an isometric action. If $S'$ is another finite, symmetric generating set for $G$, then the identity map $(G,d_S) \to (G,d_{S'})$ is a quasi-isometry (in fact it is bi-Lipschitz). So the word metric on a finitely generated group is well-defined up to quasi-isometry. In other words, the quasi-isometry type of a finitely generated group is well-defined.

Recall the fundamental observation in geometric group theory which equates the quasi-isometry type of a group with the quasi-isometry type of a metric space on which it acts geometrically. Let $X$ be a geodesic metric space which is \textit{proper}, i.e., closed balls are compact. An isometric group action $G \curvearrowright X$ is \textit{properly discontinuous} if for all compact $K \subseteq X$ the set $\s{g \in G : g \cdot K \cap K \ne \emptyset}$ is finite, and \textit{cocompact} if $G\backslash X$ is compact. The action is \textit{geometric} if it is properly discontinuous and cocompact.

\begin{lemma}[Milnor-Schwarz]
If a group $G$ acts geometrically on a proper geodesic metric space $X$, then $G$ is finitely generated and quasi-isometric to $X$. A quasi-isometry $G \mapsto X$ is given by $g \mapsto g \cdot x_0$, where $x_0 \in X$ is any basepoint.
\end{lemma}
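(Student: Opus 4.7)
The plan is to fix a basepoint $x_0 \in X$, use cocompactness to produce a ``bounded-diameter'' fundamental region, use proper discontinuity to extract a finite generating set from group elements that move the basepoint only a short distance, and then compare word length with distance in $X$ in both directions.

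First I would use cocompactness: since $G \backslash X$ is compact and the quotient map $X \to G\backslash X$ is open and continuous, there exists $R > 0$ such that the orbit $G \cdot B(x_0, R)$ covers $X$; equivalently $G \cdot x_0$ is $R$-dense in $X$. Next, let
\[
S \defeq \s{g \in G : g \ne 1,\ d(x_0, g \cdot x_0) \le 2R + 1}.
\]
The closed ball $\overline{B(x_0, 2R+1)}$ is compact since $X$ is proper, so by proper discontinuity $S$ is finite. Also $S = S^{-1}$ because the action is by isometries.

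Now I would prove that $S$ generates $G$ and that $\abs{g}_S$ is controlled by $d(x_0, g \cdot x_0)$ in one step. Given $g \in G$, choose a geodesic $\gamma : [0,\ell] \to X$ from $x_0$ to $g \cdot x_0$, where $\ell = d(x_0, g \cdot x_0)$, and pick points $x_0 = \gamma(t_0), \gamma(t_1), \dots, \gamma(t_n) = g \cdot x_0$ with $t_i - t_{i-1} \le R$ and $n \le \lceil \ell / R \rceil + 1$. For each $i$, use $R$-density of $G \cdot x_0$ to pick $g_i \in G$ with $d(\gamma(t_i), g_i \cdot x_0) \le R$, taking $g_0 = 1$ and $g_n = g$. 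Then $d(g_{i-1} \cdot x_0, g_i \cdot x_0) \le 2R + t_i - t_{i-1} \le 3R$, so applying the isometry $g_{i-1}^{-1}$ gives $d(x_0, g_{i-1}^{-1} g_i \cdot x_0) \le 3R$, which puts $s_i \defeq g_{i-1}^{-1} g_i$ in $S \cup \s{1}$. Hence $g = s_1 s_2 \cdots s_n$, so $S$ generates $G$ and
\[
\abs{g}_S \le n \le \tfrac{1}{R}\, d(x_0, g \cdot x_0) + 2.
\]

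For the reverse inequality, set $D \defeq \max_{s \in S} d(x_0, s \cdot x_0) \le 2R+1$. By the triangle inequality and isometric action, $d(x_0, (s_1 \cdots s_k) \cdot x_0) \le D \cdot k$ for any $s_1, \dots, s_k \in S$, giving $d(x_0, g \cdot x_0) \le D \cdot \abs{g}_S$. Combining the two bounds shows the orbit map $g \mapsto g \cdot x_0$ is a $(k,c)$-quasi-isometric embedding for suitable constants. Coarse surjectivity follows from the $R$-density of $G \cdot x_0$ established in the first step, so this orbit map is a quasi-isometry $G \to X$. The main (and only real) obstacle is the passage from cocompactness and proper discontinuity to a finite generating set with controlled displacement, handled by the geodesic-subdivision argument above; everything else is direct from the triangle inequality.
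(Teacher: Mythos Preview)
The paper does not prove this lemma; it is stated as the classical Milnor--Schwarz lemma and used as background, so there is no proof in the paper to compare against. Your argument is the standard one found in most textbooks.

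There is, however, a small but genuine inconsistency in your constants. You define
\[
S = \s{g \in G : g \ne 1,\ d(x_0, g \cdot x_0) \le 2R + 1},
\]
but then subdivide the geodesic with step size at most $R$, obtaining $d(g_{i-1}\cdot x_0, g_i\cdot x_0) \le 2R + (t_i - t_{i-1}) \le 3R$. For $R > 1$ this exceeds $2R+1$, so your elements $s_i = g_{i-1}^{-1}g_i$ need not lie in $S$ as defined. The fix is trivial---either define $S$ using the radius $3R$ (proper discontinuity still gives finiteness), or subdivide with step size $\le 1$ so the bound becomes $2R+1$ and keep your $S$---but as written the argument does not close. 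Once the constants are aligned, the proof is correct and is exactly the standard one the paper is implicitly citing.
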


For example, if $M$ is a compact Riemannian manifold with Riemannian universal cover $\widetilde M$, then $\pi_1(M)$ acts geometrically on $\widetilde M$ and therefore $\pi_1(M)$ is quasi-isometric to $\widetilde M$.

A more general version of the Milnor-Schwarz lemma exists for quasi-actions, which we now define. Let $(X,d)$ be a geodesic metric space, and let QI($X$) be the set of quasi-isometries $X\to X$ (in contrast to our use, QI($X$) is sometimes used to denote the \textit{quasi-isometry group} of $X$ whose elements are equivalence classes of quasi-isometries). For $k\ge 1$, a \textit{k-quasi-action} of a group $G$ on $X$ is a map $h : G \to \text{QI}(X)$ such that
\begin{enumerate}
    \item For all $g \in G$, $h(g)$ is a $(k,k)$-quasi-isometry with $k$-dense image,
    
    \item $d(h(1),\text{id}_X) \le k$;
    
    \item For all $g_1,g_2 \in G$, $d(h(g_1g_2),h(g_1)h(g_2)) \le k$,
\end{enumerate}
where $h(g_1)h(g_2)$ means $h(g_1) \circ h(g_2)$. A $k$-quasi-action $h$ is \textit{k'-cobounded} if every $G$-orbit is $k'$-dense in $X$. A \textit{(cobounded) quasi-action} is a map which is a ($k'$-cobounded) $k$-quasi-action for some $k\ge1\  (k'\ge1)$. Now we can state a stronger version of the Milnor-Schwarz lemma.

\begin{lemma}[\cite{FLS} Lemma 1.4]
Let $X$ be a geodesic metric space with basepoint $x_0$, and let $G$ be a group. Let $h : G \to \textup{QI}(X)$ be a cobounded quasi-action of $G$ on $X$, and suppose that for each $r > 0$, the set $\s{g \in G : h(g) B(x_0,r) \cap B(x_0,r) \ne \emptyset}$ is finite. Then $G$ is finitely generated and quasi-isometric to $X$. A quasi-isometry $G \to X$ is given by $g \mapsto h(g)(x_0)$.
\end{lemma}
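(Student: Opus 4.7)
The plan is to adapt the classical Milnor--Schwarz argument while carefully tracking the additive $k$-error introduced by $h$ being only a quasi-action. The candidate quasi-isometry is the orbit map $\phi(g) \defeq h(g)(x_0)$, and I would produce an explicit finite generating set $S \subset G$ whose word metric is bi-Lipschitz up to additive constants with the pullback of $d_X$ along $\phi$.

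As a preliminary I would observe that axioms (2) and (3) of a quasi-action yield $d_X(h(g)\circ h(g^{-1}), \mathrm{id}_X) \le 2k$ for every $g \in G$, so $h(g^{-1})$ is a uniform coarse inverse of $h(g)$. For $r > 0$ set $S_r \defeq \s{g \in G : d_X(x_0, h(g)(x_0)) \le r}$. Each $S_r$ is finite by the hypothesis (taking $x_0$ itself as the witness that $h(g)B(x_0,r) \cap B(x_0,r) \ne \emptyset$), and $S_r$ is symmetric because the coarse-inverse bound controls $d_X(x_0, h(g^{-1})(x_0))$ in terms of $d_X(x_0, h(g)(x_0))$ using that $h(g^{-1})$ is a $(k,k)$-quasi-isometry.

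The crux is to show that $S_{r_0}$ generates $G$ for a suitable $r_0$. Given $g \in G$, take a geodesic $\gamma$ from $x_0$ to $h(g)(x_0)$, subdivide it into $N$ pieces of length at most some fixed $\delta$, and, using $k'$-coboundedness, choose $g_0 = 1, g_1, \ldots, g_N = g$ with $d_X(\gamma(t_i), h(g_i)(x_0)) \le k'$ at each subdivision point. Consecutive orbit points then lie within $\delta + 2k'$, and combining axiom (3) applied to $g_i^{-1}g_{i+1}$ with the coarse-inverse estimate for $h(g_i^{-1})$ applied to $h(g_i)(x_0)$ produces a uniform bound $d_X(x_0, h(g_i^{-1}g_{i+1})(x_0)) \le r_0$, where $r_0 = r_0(k,k',\delta)$. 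Thus $g = \prod_i (g_i^{-1}g_{i+1})$ is a word of length at most $N \lesssim d_X(x_0, h(g)(x_0))/\delta + 1$ in $S_{r_0}$. This simultaneously proves that $G$ is finitely generated and yields the upper bound $d_G(g,g') \lesssim d_X(\phi(g), \phi(g')) + O(1)$ after one more application of the coarse-inverse estimate to relate $d_X(x_0, h(g^{-1}g')(x_0))$ to $d_X(\phi(g), \phi(g'))$.

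The reverse inequality is routine: for $s \in S_{r_0}$, axiom (3) combined with the $(k,k)$-quasi-isometry property of $h(g)$ gives $d_X(\phi(g), \phi(gs)) \le kr_0 + 2k$, and iterating over a word of length $m$ in $S_{r_0}$ yields $d_X(\phi(g), \phi(g')) \lesssim d_G(g,g')$; coarse density of $\phi(G)$ is immediate from $k'$-coboundedness. The main obstacle is the error bookkeeping in the geodesic-subdivision step, where one must simultaneously control the failure of $h$ to be a homomorphism (axiom (3)) and the failure of $h(g^{-1})$ to be a literal inverse of $h(g)$, which is precisely why the coarse-inverse lemma is set up at the beginning.
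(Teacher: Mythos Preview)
The paper does not supply its own proof of this lemma; it simply quotes the result as \cite{FLS} Lemma 1.4 and uses it as a black box. So there is no in-paper argument to compare against.

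Your proposal is correct and is exactly the standard Milnor--Schwarz argument transported to the quasi-action setting, which is what \cite{FLS} does. One small imprecision: the sets $S_r$ are not literally symmetric, since from $d_X(x_0,h(g)(x_0)) \le r$ you only get $d_X(x_0,h(g^{-1})(x_0)) \le kr + 3k$ via your coarse-inverse estimate; but this is harmless, as you can enlarge $r_0$ once more (or take $S_{r_0} \cup S_{r_0}^{-1}$) to obtain a finite symmetric generating set. Everything else---the geodesic subdivision, the bound on $d_X(x_0, h(g_i^{-1}g_{i+1})(x_0))$ using axiom (3) and the coarse inverse, the reverse Lipschitz inequality, and coarse surjectivity from coboundedness---goes through as you describe.
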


Here are two more lemmas related to quasi-actions that will be useful later.

\begin{lemma}\label{one dense orbit}
    If a quasi-action has at least one dense orbit, then it is cobounded.
\end{lemma}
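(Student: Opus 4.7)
The plan is to interpret ``dense'' in the coarse-geometric sense, meaning $R$-dense for some finite $R \ge 0$ (a topologically dense orbit is in particular $R$-dense for any $R > 0$, so this covers both readings of the hypothesis), and then to show that if one orbit $G \cdot x_0$ is $R$-dense, every other orbit $G \cdot y$ is $R'$-dense for a single constant $R'$ that depends only on $R$ and the quasi-action constant $k$. This uniform density of all orbits is exactly coboundedness.

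The key idea is to transport $y$ close to $x_0$ using an element $g_1^{-1}$, and then transport $x_0$ close to an arbitrary target point $z$ using an element $g_2$; the composite $g \defeq g_2 g_1^{-1}$ will then carry $y$ close to $z$. More precisely, given any $y, z \in X$, first use the $R$-density of $G \cdot x_0$ to pick $g_1, g_2 \in G$ with $d(h(g_1)(x_0), y) \le R$ and $d(h(g_2)(x_0), z) \le R$. Applying the $(k,k)$-quasi-isometry $h(g_1^{-1})$ to the pair $(y, h(g_1)(x_0))$, and then invoking the axioms $d(h(g_1^{-1}) h(g_1), h(1)) \le k$ and $d(h(1), \textup{id}_X) \le k$, shows that $h(g_1^{-1})(y)$ lies within a bounded distance of $x_0$ (roughly $kR + 3k$). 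Applying the quasi-isometry $h(g_2)$ and using $d(h(g_2 g_1^{-1}), h(g_2) h(g_1^{-1})) \le k$ then shows that $h(g)(y)$ lies within a bounded distance of $h(g_2)(x_0)$, which in turn is within $R$ of $z$. Summing these constants yields a bound on $d(h(g)(y), z)$ depending only on $k$ and $R$, proving $G \cdot y$ is $R'$-dense with $R'$ independent of $y$.

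There is no real obstacle here: the argument is essentially a short bookkeeping computation using the three axioms of a quasi-action together with the quasi-isometry inequality $d(h(g)(a), h(g)(b)) \le k \, d(a,b) + k$. The only point that deserves care is tracking how error constants accumulate under composition, since each application of a $(k,k)$-quasi-isometry multiplies the prior error by $k$ and adds $k$. The essential mechanism is simply that in a quasi-action, $h(1)$ and $h(g_1^{-1}) h(g_1)$ both act close to the identity map, so the transport $x_0 \mapsto y$ provided by $h(g_1)$ can be approximately undone by applying $h(g_1^{-1})$.
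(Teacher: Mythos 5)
Your proof is correct and takes essentially the same approach as the paper: both pick $g_1, g_2$ from the $R$-dense orbit of $x_0$ and show that $g_2 g_1^{-1}$ carries an arbitrary $y$ within a uniformly bounded distance of an arbitrary target, using the three quasi-action axioms and the quasi-isometry inequality. The only difference is minor bookkeeping — you apply $h(g_1^{-1})$ directly to transport $y$ near $x_0$, whereas the paper invokes the lower quasi-isometry bound for $h(g_1)$ to achieve the same effect, yielding a slightly different (but equally serviceable) explicit constant.
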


\begin{proof}
Suppose $h : G \to \text{QI}(X)$ is a $k$-quasi-action such that the $G$-orbit of $x \in X$ is $k'$-dense in $X$. Let $y,p \in X$ be arbitrary, and take $g_1,g_2 \in G$ such that $d(y,h(g_1)(x)) \le k'$ and $d(p,h(g_2)(x)) \le k'$. Then
\begin{align*}
    d(p,h(g_2g_1^{-1})(y))
&\le
    d(p,h(g_2)(x)) 
    +
    d(h(g_2)(x),h(g_2g_1^{-1})(y))
\\&\le
    k' 
    +
    d(h(g_2)(x),h(g_2)h(g_1^{-1})(y))
    +
    d(h(g_2)h(g_1^{-1})(y),h(g_2g_1^{-1})(y))
\\&\le
    k'
    +
    kd(x,h(g_1^{-1})(y))+k
    +
    k
\\&\le
    k'+2k
    +
    k^2d(h(g_1)(x),h(g_1)h(g_1^{-1})(y)) + k^3
\\&\le
    k' + 2k + k^3
    +
    k^2 d(h(g_1)(x), h(1)(y)) 
    +
    k^2 d(h(1)(y), h(g_1)h(g_1^{-1})(y))
\\&\le
    k' + 2k + k^3
    +
    k^2d(h(g_1)(x),y)
    +
    k^2 d(y,h(1)(y))
    +
    k^3
\\&\le
    k'+2k+3k^3+k^2k'.
\end{align*}
So $h$ is $(k'+2k+3k^3+k^2k')$-cobounded. 
\end{proof}

\begin{lemma}\label{induced quasi-action}
    A quasi-isometry between a finitely generated group $G$ and the fundamental group $\pi_1(M)$ of a geodesic metric space induces a cobounded quasi-action of $G$ on the metric universal cover $\widetilde M$. 
\end{lemma}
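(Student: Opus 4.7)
The plan is to transport the deck-transformation action of $\pi_1(M)$ on $\widetilde{M}$ across the given quasi-isometry $f : G \to \pi_1(M)$ to produce a quasi-action of $G$ on $\widetilde{M}$. First I would apply the Milnor--Schwarz lemma to the geometric action $\pi_1(M) \curvearrowright \widetilde{M}$ by deck transformations, obtaining a quasi-isometry $\phi : \pi_1(M) \to \widetilde{M}$ given by $\gamma \mapsto \gamma \cdot \widetilde{x}_0$ for some basepoint $\widetilde{x}_0 \in \widetilde{M}$. Composing with $f$ yields a quasi-isometry $\Phi \defeq \phi \circ f : G \to \widetilde{M}$; let $\Psi : \widetilde{M} \to G$ be a quasi-inverse.

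Next I would define $h : G \to \textup{QI}(\widetilde{M})$ by
\[
    h(g)(x) \defeq \Phi\bigl(g \cdot \Psi(x)\bigr).
\]
Each $h(g)$ is a composition of $\Psi$, the isometry of $(G, d_S)$ given by left multiplication by $g$, and $\Phi$, so it is a $(k,c)$-quasi-isometry with coarsely surjective image, and the constants $(k,c)$ do not depend on $g$. This verifies axiom (1). Axiom (2) follows from $h(1)(x) = \Phi(\Psi(x))$ being uniformly close to $x$. For axiom (3), starting from
\[
    d_{\widetilde{M}}\bigl(h(g_1 g_2)(x),\, h(g_1)h(g_2)(x)\bigr)
    = d_{\widetilde{M}}\bigl(\Phi(g_1 g_2 \Psi(x)),\, \Phi(g_1 \Psi \Phi(g_2 \Psi(x)))\bigr),
\]
I would apply the upper quasi-isometry bound for $\Phi$, left-invariance of $d_S$ on $G$, and the estimate $d_G(\textup{id}_G, \Psi \circ \Phi) \le c$ to reduce this to a uniform constant.

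For coboundedness, the orbit of $\widetilde{x}_0$ is
\[
    \{h(g)(\widetilde{x}_0) : g \in G\} = \{\Phi(g \cdot \Psi(\widetilde{x}_0)) : g \in G\},
\]
which equals the image of $\Phi$ since the map $g \mapsto g \cdot \Psi(\widetilde{x}_0)$ is a bijection of $G$. Because $\Phi$ is a quasi-isometry it is coarsely surjective, so this orbit is $r$-dense in $\widetilde{M}$ for some $r$. Hence $h$ has a dense orbit and is cobounded by Lemma \ref{one dense orbit}.

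No step here is deep; the entire argument is the standard conjugation-by-quasi-isometry trick. The main subtlety worth flagging is that one must use \emph{left} multiplication by $g$ in the definition of $h(g)$, since left-invariance of the word metric on $G$ is precisely what makes the associativity estimate in axiom (3) telescope cleanly.
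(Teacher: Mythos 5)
Your proposal is correct and follows the same route as the paper: compose the given quasi-isometry with the Milnor--Schwarz quasi-isometry $\pi_1(M)\to\widetilde M$, conjugate the left-multiplication action of $G$ on itself through the resulting quasi-isometry and its quasi-inverse, and check the quasi-action axioms plus coboundedness via coarse surjectivity. The only cosmetic difference is that you invoke Lemma \ref{one dense orbit} explicitly where the paper just states coboundedness directly; the underlying argument is identical.
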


\begin{proof}
Indeed, since $\pi_1(M)$ is quasi-isometric to $\widetilde M$, we get a quasi-isometry $\phi : G \to \widetilde M$. Let $\psi : \widetilde M \to G$ be a quasi-inverse of $\phi$. For each $g \in G$, define $h(g) : \widetilde M \to \widetilde M$ by
\[
    h(g)(x) = \phi(g\psi(x)), \qquad
    x \in \widetilde M.
\]
Then $h(g)$ is the composition of three quasi-isometries with fixed constants. So $h(g)$ is a quasi-isometry with constants that are independent of $g$. Moreover, $h(1) = \phi \circ \psi$ has finite distance from $\text{id}_{\widetilde M}$. It is also easily checked that $h(g_1g_2)$ has finite distance (bounded independently of $g_1,g_2$) from $h(g_1) \circ h(g_2)$. So $h$ is a quasi-action, and since $\phi$ is coarsely surjective and the left multiplication action of $G$ on itself is transitive, $h$ is cobounded.
\end{proof}

Finally, we recall some facts about non-uniform rank one lattices and their associated neutered spaces. Details may be found in \cite{Schwartz} and \cite{drutu-kapovich}. Let $X$ be a negatively curved symmetric space with distance function $d_X$, and let $\Lambda$ be a non-uniform lattice in Isom($X$). We call such a group a \textit{non-uniform rank one lattice}. By Selberg's lemma \cite{Selberg}, $\Lambda$ is virtually torsion-free. Since we are concerned with the quasi-isometry type of $\Lambda$, we may assume without loss of generality that $\Lambda$ is torsion-free. Then $X/\Lambda$ is a finite-volume non-compact manifold. By truncating the cusps of $X/\Lambda$ we obtain a compact manifold with boundary $Y$, and the universal cover is a \textit{neutered space} $B \subset X$. By definition, $B$ is the complement in $X$ of the union of an infinite equivariant family of open horoballs with pairwise disjoint closures. We refer to the boundaries of these horoballs as the \textit{peripheral horospheres} (or just \textit{horospheres}) of $B$. Since $X/\Lambda$ has finite volume, it has finitely many cusps and $Y$ has finitely many boundary components. It follows that there is an $R > 0$ such that the $d_X$-distance between every pair of distinct horosopheres of $B$ is at least $R$. A horosphere $O$ is centered at a point $p$ in the ideal boundary $\partial X$. We say that $p$ is the \textit{basepoint} of $O$ and that $O$ is \textit{based} at $p$. Since $X/\Lambda$ has finite volume, the basepoints of the horospheres of $B$ are dense in $\partial X$. In general, two horospheres in $X$ share a basepoint if and only if the Hausdorff distance between them is finite.

Since $B$ is a subspace of $X$, it has two natural metrics: the restricted metric $d_X\vert_B$, and the induced length metric $d_B(x,y) \defeq \inf_p\text{length}_{X}(p)$, where the infimum is taken over all paths in $B$ between $x$ and $y$. We will often use the following fact, sometimes without mention.
\begin{lemma}[\cite{drutu-kapovich} Lemma 24.2]\label{unif proper}
    The identity map $(B,d_B) \to (B,d_X\vert_B)$ is 1-Lipschitz and uniformly proper.
\end{lemma}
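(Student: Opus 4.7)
My plan is to treat the two conclusions separately. The 1-Lipschitz assertion is immediate from the definitions: every rectifiable path in $B$ from $x$ to $y$ has $X$-length at least $d_X(x,y)$, so taking the infimum over such paths yields $d_B(x,y) \ge d_X(x,y)$. The heart of the lemma is uniform properness: given $D > 0$ and any pair $x, y \in B$ with $d_X(x,y) \le D$, I must exhibit a path in $B$ of length bounded by a function of $D$ alone.

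To produce such a path, I take the $X$-geodesic segment $\gamma$ from $x$ to $y$ and modify it to avoid the removed horoballs. Since $X$ is negatively curved, every closed horoball in $X$ is (strictly) convex, so $\gamma$ meets each horoball of $B$ either not at all or in a single chord with both endpoints on the bounding horosphere. Let $H_1, \dots, H_k$ be the horoballs met by $\gamma$, with chord lengths $\ell_1, \dots, \ell_k$. Between any two consecutive chords $\gamma$ traverses the gap between two distinct peripheral horospheres, which has width at least $R > 0$ by hypothesis, so $(k-1)R \le D$ and hence $k \le D/R + 1$. For each $i$ I replace the chord inside $H_i$ with a path along the horosphere $\partial H_i$ joining its two endpoints. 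Concatenation produces a path in $B$ of length at most
\[
D + \sum_{i=1}^{k} F(\ell_i) \le D + (D/R + 1)\, F(D),
\]
where $F$ is the horospherical distortion function discussed next.

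The key technical input is the existence of a function $F : [0, \infty) \to [0, \infty)$, depending only on $X$, such that whenever $a, b$ lie on a common horosphere $O$ one has $d_O(a, b) \le F(d_X(a, b))$. This holds because $\Isom{X}$ acts transitively on the set of horospheres of $X$, so the comparison reduces to a single model horosphere, and the intrinsic-versus-extrinsic distance estimate there is classical (for instance $F(\ell) = 2\sinh(\ell/2)$ on a real hyperbolic horosphere). Granting $F$, the displayed inequality gives the required bound. The main obstacle to a fully self-contained proof is thus establishing this estimate on $F$ in each of the negatively curved symmetric spaces; this ultimately rests on the fact that each horosphere carries the structure of a simply connected nilpotent Lie group whose left-invariant metric is exponentially distorted relative to the ambient metric on $X$, a well-known feature of the rank one symmetric spaces.
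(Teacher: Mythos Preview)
The paper does not supply its own proof of this lemma; it is simply cited as Lemma~24.2 of Dru\c{t}u--Kapovich and used as a black box. Your argument is correct and is essentially the standard proof one finds in that reference: the $1$-Lipschitz direction is immediate, and for uniform properness you take the ambient $X$-geodesic, observe by convexity of horoballs and the separation constant $R$ that it can meet at most $D/R+1$ removed horoballs, and replace each chord by a detour along the bounding horosphere whose intrinsic length is controlled by the (at worst exponential) distortion function $F$.

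One small remark: your claim that the chord through each horoball has both endpoints on the bounding horosphere uses that the endpoints $x,y$ already lie in $B$; you might say this explicitly, since if $x$ happened to lie on a peripheral horosphere the first ``chord'' could degenerate to a single point, but this causes no trouble for the estimate.
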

Recall that a map $f : Z\to W$ between proper metric spaces is \textit{uniformly proper} if $f$ is coarse Lipschitz and there exists a function $\zeta : \bR_+ \to \bR_+$ such that $\text{diam}(f^{-1}(B(w,r))) \le \zeta(r)$ for all $w\in W$, $r > 0$.

\section{Projecting a quasi-action}

Let $X,\Lambda,Y,$ and $B$ be as above. Let $L$ be a lattice in a simply connected nilpotent Lie group $N$. Define $M$ to be the product $Y \times (N/L)$. Let $\Gamma$ be a finitely generated group and assume that $\Gamma$ is quasi-isometric to $\Lambda \times L$. Then by Lemma \ref{induced quasi-action}, the quasi-isometry between $\Gamma$ and $\pi_1(M) = \Lambda \times L$ induces a $k$-cobounded $k$-quasi-action $h$ of $\Gamma$ on $\widetilde{M} = B \times N$ for some $k\ge1$. We denote the metrics on $B\times N, X$, and $B$ by $d, d_X$, and $d_B$, respectively.

We first show that $h$ induces a quasi-action of $\Gamma$ on $B$. In order to do so, we use a theorem of Kapovich--Kleiner--Leeb \cite{KKL} which implies that quasi-isometries of the product $B \times N$ coarsely project to quasi-isometries of $B$. Their theorem is more general than we need, so we only state its application in our situation.

\begin{proposition}\cite[Theorem B]{KKL}
    There exist $D>0$ and $k' \ge 1$, both depending only on $k$, for which the following holds: For each $\gamma \in \Gamma$, there exists a $(k',k')$-quasi-isometry $\psi(\gamma) : B \to B$ with $k'$-dense image such that the diagram 
    \[
    \begin{tikzcd}
        B \times N
        \ar[r,"h(\gamma)"] 
        \ar[d,"\pi"'] 
    &    
        B \times N 
        \ar[d,"\pi"]
    \\    
        B \ar[r,"\psi(\gamma)"'] 
    &    
        B
\end{tikzcd}
\]
commutes up to an error bounded by $D$, where $\pi : B \times N \to B$ is the natural projection.
\end{proposition}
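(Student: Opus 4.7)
The plan is to apply the product quasi-isometric rigidity theorem of Kapovich--Kleiner--Leeb directly. That theorem asserts, broadly, that any quasi-isometry between products of sufficiently irreducible metric spaces with pairwise distinct quasi-isometry types must preserve the product structure up to bounded error, after a permutation of the factors. So the heart of the plan is (i) to verify that $(B,d_B)$ and $(N,d_N)$ fall into a class of factors covered by their theorem, (ii) to conclude that each $h(\gamma)$ splits as an almost-product of quasi-isometries of the factors, and (iii) to read off $\psi(\gamma)$ from the $B$-component.

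For step (i), observe that the two factors have sharply different large-scale geometry: $N$ has polynomial growth, while $(B,d_B)$ is relatively hyperbolic with respect to its collection of horospheres, hence in particular has exponential growth and non-trivial cut points in asymptotic cones. This makes $B$ and $N$ irreducible factors of incompatible type, so the KKL product theorem rules out any swap of factors and yields, for each $\gamma\in\Gamma$, a pair of quasi-isometries $\psi(\gamma):B\to B$ and $\eta(\gamma):N\to N$, with quasi-isometry constants and error $D$ depending only on $k$, such that
\[
d\bigl(h(\gamma)(b,n),\,(\psi(\gamma)(b),\eta(\gamma)(n))\bigr)\le D
\]
for every $(b,n)\in B\times N$. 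Uniformity of the constants in $\gamma$ is automatic, since every $h(\gamma)$ is a $(k,k)$-quasi-isometry. Projecting this inequality through $\pi$ gives $d_B\bigl(\pi\circ h(\gamma)(b,n),\,\psi(\gamma)\circ\pi(b,n)\bigr)\le D$, which is precisely the commutativity of the diagram up to error $D$. Coarse surjectivity of $\psi(\gamma)$, with $k'$-dense image for a uniform $k'=k'(k)$, follows from coarse surjectivity of $h(\gamma)$ together with the almost-splitting displayed above.

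The main obstacle is the verification in step (i) that $B$ is of a type to which the KKL theorem applies, since $B$ is not itself a symmetric space or a CAT(0) space. One should either invoke the version of their theorem phrased in terms of asymptotic cones (where the decomposition of the cone of $B\times N$ as a product with one factor having cut points and the other being a homogeneous nilpotent cone forces the splitting at the quasi-isometry level), or appeal to the relatively hyperbolic structure of $B$ to certify irreducibility. Once this hypothesis check is done, the remaining work is purely formal application of the cited theorem and a triangle-inequality calculation, which is why the author cites the result rather than reproving it.
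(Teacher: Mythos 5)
The paper does not prove this Proposition: it is stated as a direct consequence of Theorem~B of Kapovich--Kleiner--Leeb, with the explicit remark that ``their theorem is more general than we need, so we only state its application in our situation.'' Your plan of citing KKL and checking its hypotheses is therefore the same route as the paper's. There is, however, a genuine overstatement in your intermediate step.

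You assert that KKL produces a full coarse product splitting $h(\gamma)(b,n)\approx(\psi(\gamma)(b),\eta(\gamma)(n))$ with a well-defined self-quasi-isometry $\eta(\gamma)$ of $N$. This is stronger than what their theorem yields, and it is false in general whenever $N$ has a nontrivial Euclidean de Rham factor --- in particular whenever $N$ is abelian, which is exactly the setting of Theorem~\ref{FLS}. In that case the Euclidean factor can shear against $B$: a map of the form $(b,t)\mapsto\bigl(b,\,t+\phi(b)\bigr)$ with $\phi\colon B\to\bR^{k}$ Lipschitz is a bi-Lipschitz self-map of $B\times\bR^{k}$ that does not split as a product of self-maps of the two factors. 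What KKL actually guarantees is the weaker statement that each $h(\gamma)$ coarsely preserves the fibration over the non-Euclidean, coarsely irreducible factors; i.e.\ the $B$-projection coarsely commutes with $h(\gamma)$. That is precisely what the proposition asserts and all that is needed downstream, so your final displayed inequality is correct, but it should be read off directly from KKL rather than derived from the (unavailable) full splitting.

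On the hypothesis check you flagged as the main obstacle: it does go through, and it is worth making it concrete. The neutered space $(B,d_B)$ is quasi-isometric (via Milnor--Schwarz) to the non-uniform lattice $\Lambda$, which is hyperbolic relative to its cusp subgroups; asymptotic cones of such groups are tree-graded and have global cut points (Dru\c{t}u--Sapir), which forces coarse de Rham irreducibility, since the cone of a product of two unbounded factors has no cut points. Moreover $B$ has exponential growth while every de Rham factor of the nilpotent group $N$ has polynomial growth, so no permutation of factors can carry $B$ to a piece of $N$. This verifies the KKL hypotheses, and since every $h(\gamma)$ is a $(k,k)$-quasi-isometry with $k$-dense image, the constants $D$ and $k'$ coming out of KKL depend only on $k$, as required.
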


In an effort to make calculations more readable, we denote the value of $\pi$ at $(b,n) \in B \times N$ by $\pi[(b,n)]$ instead of $\pi((b,n))$. 

\begin{lemma}
The map $\psi : \Gamma \to \textup{QI}(B)$ is a $K$-cobounded $K$-quasi-action for some $K \ge 1$.
\end{lemma}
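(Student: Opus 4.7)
The plan is to verify the three defining axioms of a quasi-action together with coboundedness directly from the commuting-up-to-$D$ diagram furnished by the Kapovich--Kleiner--Leeb proposition. The properties ``$(k',k')$-quasi-isometry with $k'$-dense image'' are already part of the conclusion of that proposition, so item (1) of the definition is free; it remains only to bound $d_B(\psi(1),\mathrm{id}_B)$, $d_B(\psi(\gamma_1\gamma_2),\psi(\gamma_1)\psi(\gamma_2))$, and to exhibit a cobounded orbit. The single input beyond the diagram is the observation that the natural projection $\pi\colon B\times N\to B$ is $1$-Lipschitz (with $B$ carrying its induced length metric and $B\times N$ the corresponding product metric), since each of the properties we need reduces to a triangle-inequality computation in $B$ after inserting the approximate equality $\pi\circ h(\gamma)\approx_D \psi(\gamma)\circ\pi$.

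To bound $d_B(\psi(1),\mathrm{id}_B)$, pick $b\in B$ and any $n\in N$. The commuting diagram gives $d_B(\psi(1)(b),\pi[h(1)(b,n)])\le D$, while axiom (2) for $h$ gives $d(h(1)(b,n),(b,n))\le k$, and then $1$-Lipschitzness of $\pi$ yields $d_B(\pi[h(1)(b,n)],b)\le k$. Combining, $d_B(\psi(1)(b),b)\le D+k$ uniformly in $b$. For the associativity axiom, fix $\gamma_1,\gamma_2\in\Gamma$ and $b\in B$, choose $n\in N$, and chain together the following comparisons, each bounded by a constant depending only on $k,k',D$: (a) $\psi(\gamma_1\gamma_2)(b)$ is $D$-close to $\pi[h(\gamma_1\gamma_2)(b,n)]$; (b) the latter is $k$-close to $\pi[h(\gamma_1)h(\gamma_2)(b,n)]$ by axiom (3) for $h$ together with $1$-Lipschitzness of $\pi$; (c) the diagram applied to $\gamma_1$ gives that this is $D$-close to $\psi(\gamma_1)(\pi[h(\gamma_2)(b,n)])$; (d) the diagram for $\gamma_2$ and the quasi-isometry bound for $\psi(\gamma_1)$ give $d_B(\psi(\gamma_1)(\pi[h(\gamma_2)(b,n)]),\psi(\gamma_1)\psi(\gamma_2)(b))\le k'D+k'$. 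The triangle inequality then yields $d_B(\psi(\gamma_1\gamma_2)(b),\psi(\gamma_1)\psi(\gamma_2)(b))\le 2D+k+k'D+k'$, independent of $\gamma_1,\gamma_2,b$.

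For coboundedness, fix any $b_0\in B$ and $n_0\in N$. Since $h$ is $k$-cobounded, the $\Gamma$-orbit of $(b_0,n_0)$ is $k$-dense in $B\times N$, and its image under the $1$-Lipschitz map $\pi$ is $k$-dense in $B$. But the diagram gives $d_B(\psi(\gamma)(b_0),\pi[h(\gamma)(b_0,n_0)])\le D$ for every $\gamma$, so the $\psi$-orbit of $b_0$ is $(k+D)$-dense in $B$. (Alternatively, invoke Lemma \ref{one dense orbit}.) Taking $K$ to be the maximum of $k'$, $D+k$, $2D+k+k'D+k'$, and $k+D$ establishes the lemma.

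The ``main obstacle'' is essentially bookkeeping: tracking how the quasi-isometry constant $k'$ of $\psi(\gamma_1)$ interacts with the diagram error $D$ in step (d) of the associativity check. There is no conceptual difficulty once one fixes the convention that $\pi$ is $1$-Lipschitz for the product metric, which is standard for the natural length metric on $B\times N$.
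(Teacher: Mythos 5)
Your proof is correct and takes essentially the same approach as the paper's: both verify the three quasi-action axioms and coboundedness by inserting the approximate commutation $\pi\circ h(\gamma)\approx_D\psi(\gamma)\circ\pi$ into triangle-inequality chains in $B$, using the $1$-Lipschitzness of $\pi$ to transfer the quasi-action estimates for $h$ down to $B$, and arriving at the same constants.
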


\begin{proof}
By definition, we already know that each $\psi(\gamma)$ is a $(k',k')$-quasi-isometry with $k'$-dense image.  In the following calculations, 1 denotes the identity element of $\Gamma$ and $e$ denotes the identity element of $N$. For all $b \in B$, 
\begin{align*}
    d_B(\psi(1)(b),b)
&\le
    d_B(\psi(1)(b),\pi[h(1)(b,e)]) + d_B(\pi[h(1)(b,e)],\pi[(b,e)])
\\&\le
    D + d(h(1)(b,e),(b,e))
\\&\le
    D+k.
\end{align*}
Let $\gamma_1,\gamma_2 \in \Gamma$ and $b \in B$. Then
\begin{align*}
    d_B(\psi(\gamma_1)\psi(\gamma_2)(b), \psi(\gamma_1\gamma_2)(b))
&\le
    d_B(\psi(\gamma_1)\psi(\gamma_2)(b), \pi[h(\gamma_1)h(\gamma_2)(b,e)]) 
\\&\qquad +
    d_B(\pi[h(\gamma_1)h(\gamma_2)(b,e)], \pi[h(\gamma_1\gamma_2)(b,e)])
\\&\qquad +
    d_B(\pi[h(\gamma_1\gamma_2)(b,e)], \psi(\gamma_1\gamma_2)(b)).
\end{align*}
The first term on the right can be bounded as follows.
\begin{align*}
    d_B(\psi(\gamma_1)\psi(\gamma_2)(b), \pi[h(\gamma_1)h(\gamma_2)(b,e)])
&\le
    d_B(\psi(\gamma_1)\psi(\gamma_2)(b), \psi(\gamma_1)(\pi[h(\gamma_2)(b,e)]))
    \\&\qquad +
    d_B(\psi(\gamma_1)(\pi[h(\gamma_2)(b,e)]), \pi[h(\gamma_1)h(\gamma_2)(b,e)])
\\&\le
    k' d_B(\psi(\gamma_2)(b), \pi[h(\gamma_2)(b,e)]) + k'
    +
    D
\\&\le
    k' D + k' + D.
\end{align*}
Hence,
\begin{align*}
    d_B(\psi(\gamma_1)\psi(\gamma_2)(b), \psi(\gamma_1\gamma_2)(b)) 
&\le
    (k' D + k' + D) 
    + 
    d_B(\pi[h(\gamma_1)h(\gamma_2)(b,e)], \pi[h(\gamma_1\gamma_2)(b,e)])
\\&\qquad +
    d_B(\pi[h(\gamma_1\gamma_2)(b,e)], \psi(\gamma_1\gamma_2)(b))
\\&\le
    k' D + k' + D
    +
    d(h(\gamma_1)h(\gamma_2)(b,e), h(\gamma_1\gamma_2)(b,e)) 
    + 
    D
\\&\le
    k' D + k' + 2D + k.
\end{align*}
Set $K = k' D + k' + 2D + k$. So $\psi$ is a $K$-quasi-action. Let $b_1,b_2 \in B$. Since $h$ is $k$-cobounded, there exists $\gamma \in \Gamma$ such that $d_B( h(\gamma)(b_1,e),(b_2,e)) \le k$. Then
\begin{align*}
    d_B(\psi(\gamma)(b_1),b_2) 
&\le
    d_B(\psi(\gamma)(b_1), \pi[h(\gamma)(b_1,e)])
    +
    d_B(\pi[h(\gamma)(b_1,e)], \pi[(b_2,e)])
\\&\le
    D + d(h(\gamma)(b_1,e),(b_2,e))
\\&\le
    D + k.
\end{align*}
So by possibly increasing $K$, it follows that $\psi$ is $K$-cobounded.

\end{proof}

Next we show that the quasi-action $\psi$ induces a homomorphism $\Gamma \to \text{Isom}(X)$. It is proved in \cite{Schwartz} that the quasi-isometry $\psi(\gamma)$ of $B$ coarsely extends to an isometry of $X$. More precisely, for each $\gamma \in \Gamma$ there exists $\beta>0$ and $\theta(\gamma) \in \text{Isom}(X)$ such that 
\begin{align}\label{isometric extension}
    d_X(\psi(\gamma)(b),\theta(\gamma)(b)) \le \beta
\end{align}
for all $b \in B$. It turns out the constant $\beta$ depends only on the quasi-isometry constants of $\psi(\gamma)$. Hence $\beta$ depends on $K$ and not on $\gamma$. Before we show that $\theta$ is our desired homomorphism, we need the following lemma which states that isometries of $X$ which behave similarly on $B$ must be identical.

\begin{lemma}\label{isometry rigidity}
Let $\alpha,\alpha' \in \textup{Isom}(X)$. If there is a constant $c$ such that $d_X(\alpha(b),\alpha'(b)) \le c$ for all $b \in B$, then $\alpha = \alpha'$.
\end{lemma}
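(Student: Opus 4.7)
The plan is to reduce to the following reformulation: setting $\beta \defeq \alpha^{-1} \circ \alpha' \in \textup{Isom}(X)$, the hypothesis together with the fact that $\alpha^{-1}$ is an isometry gives $d_X(b, \beta(b)) \le c$ for all $b \in B$, so it suffices to show that $\beta = \text{id}_X$. The strategy is to first show that the boundary homeomorphism induced by $\beta$ fixes a dense subset of $\partial X$, and then to conclude that $\beta$ is the identity on all of $X$.

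For the first step, I would exploit the peripheral horospheres. Let $O$ be any peripheral horosphere of $B$, based at some $p \in \partial X$. Since $O \subset B$, we have $d_X(x, \beta(x)) \le c$ for every $x \in O$, and therefore $O$ and $\beta(O)$ are at Hausdorff distance at most $c$ in $X$. Because isometries of $X$ send horospheres to horospheres, $\beta(O)$ is itself a horosphere. Invoking the fact from the preliminaries that two horospheres of $X$ share a basepoint if and only if their Hausdorff distance is finite, I conclude that $\beta(O)$ is based at $p$, so $\beta$ fixes $p \in \partial X$. Running over all peripheral horospheres, the extension of $\beta$ to $\partial X$ fixes the set of basepoints of $B$, which by finite-volume-ness of $X/\Lambda$ is dense in $\partial X$. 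Continuity of the boundary extension then gives that $\beta$ fixes all of $\partial X$ pointwise.

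For the final step, I would use that in a negatively curved symmetric space every geodesic line is uniquely determined by its pair of endpoints on $\partial X$. Since $\beta$ fixes every point of $\partial X$, it preserves every geodesic line of $X$ setwise. Given any $x \in X$, pick two distinct geodesic lines through $x$; each is preserved by $\beta$, and $\beta(x)$ must lie on both, so $\beta(x) = x$. Hence $\beta = \text{id}_X$ and $\alpha = \alpha'$.

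The main obstacle is essentially bookkeeping: every ingredient (horospheres going to horospheres under isometries, the characterization of shared basepoints via finite Hausdorff distance, density of basepoints in $\partial X$, and unique geodesics in $\textup{CAT}(-1)$ geometry) is either standard or explicitly recorded in the preliminaries, so the proof should reduce to chaining these three observations together without any new technical input.
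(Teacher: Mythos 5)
Your proof is correct and follows essentially the same strategy as the paper's: reduce to showing a single isometry is the identity, use the bounded displacement on horospheres to conclude it fixes the dense set of basepoints in $\partial X$, and then that it is the identity on $X$. The only minor variations are cosmetic: you detect the fixed basepoint via the finite-Hausdorff-distance characterization of shared basepoints rather than via a convergent sequence on a horosphere, and you spell out the final ``identity on $\partial X$ implies identity on $X$'' step with the two-geodesics argument, which the paper simply asserts as known.
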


\begin{proof}
By considering $(\alpha')^{-1} \circ \alpha$, we may assume without loss of generality that $\alpha' = \text{Id}_X$. Let $O$ be a peripheral horosphere of $B$ and let $p \in \partial X$ be its basepoint. Let $\s{b_n}$ be a sequence of points in $O$ converging to $p$. Since $d_X(\alpha(b_n),b_n) \le c$ for all $n$, the sequence $\s{\alpha(b_n)}$ also converges to $p$. So the extension of $\alpha$ to $\partial X$ must fix $p$. Since the basepoints of the horospheres of $B$ are dense in $\partial X$ and the extension of $\alpha$ to $\partial X$ is a homeomorphism, $\alpha$ must fix $\partial X$ pointwise. Only the identity map on $X$ extends to the identity map on $\partial X$, so we must have $\alpha = \text{id}_X$.
\end{proof}

\begin{lemma}
The map $\theta : \Gamma \to \textup{Isom}(X)$ is a group homomorphism.
\end{lemma}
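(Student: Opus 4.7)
The plan is to use Lemma \ref{isometry rigidity} as the rigidity tool: it suffices to show that, for each pair $\gamma_1,\gamma_2 \in \Gamma$, the two isometries $\theta(\gamma_1\gamma_2)$ and $\theta(\gamma_1)\theta(\gamma_2)$ stay at bounded $d_X$-distance from each other on all of $B$. Once this is established for a constant independent of $b$ (though possibly depending on $\gamma_1,\gamma_2$), the lemma forces them to agree as elements of $\textup{Isom}(X)$. Multiplicativity then implies $\theta(1) = \textup{id}_X$ (and hence $\theta(\gamma^{-1}) = \theta(\gamma)^{-1}$) by the usual formal argument, so $\theta$ is a homomorphism.

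To bound $d_X(\theta(\gamma_1\gamma_2)(b),\theta(\gamma_1)\theta(\gamma_2)(b))$ for $b \in B$, I would interpolate through $\psi$ via the four terms
\[
\theta(\gamma_1\gamma_2)(b),\ \psi(\gamma_1\gamma_2)(b),\ \psi(\gamma_1)\psi(\gamma_2)(b),\ \theta(\gamma_1)\psi(\gamma_2)(b),\ \theta(\gamma_1)\theta(\gamma_2)(b),
\]
and apply the triangle inequality. The first and third successive differences are controlled by the isometric-extension bound (\ref{isometric extension}), giving $\beta$ each; crucially, the third application is legitimate because $\psi(\gamma_2)(b) \in B$, so $\theta(\gamma_1)$ approximates $\psi(\gamma_1)$ at that point. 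The fourth successive difference is $d_X(\theta(\gamma_1)\psi(\gamma_2)(b),\theta(\gamma_1)\theta(\gamma_2)(b)) = d_X(\psi(\gamma_2)(b),\theta(\gamma_2)(b)) \le \beta$, since $\theta(\gamma_1)$ is an isometry of $X$. The middle term is controlled by the quasi-action property of $\psi$: since $\psi$ is a $K$-quasi-action, $d_B(\psi(\gamma_1\gamma_2)(b),\psi(\gamma_1)\psi(\gamma_2)(b)) \le K$, and this passes to $d_X$ because $d_X\!\restriction_B \le d_B$ by Lemma \ref{unif proper}. Summing gives $d_X(\theta(\gamma_1\gamma_2)(b),\theta(\gamma_1)\theta(\gamma_2)(b)) \le 3\beta + K$ for all $b \in B$, and Lemma \ref{isometry rigidity} closes the argument.

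There is no substantial obstacle here; the only thing to watch is the bookkeeping between the two metrics on $B$ and making sure each invocation of (\ref{isometric extension}) is applied at a point lying in $B$ (so that the inequality is available). Since $\beta$ depends only on $K$ and $K$ depends only on $k$, the bound is uniform in $\gamma_1,\gamma_2$, which is reassuring but not strictly needed for this lemma.
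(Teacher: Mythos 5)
Your proof is correct and follows essentially the same path as the paper: the same five-point interpolation through $\psi$, the same use of \eqref{isometric extension} for the outer terms and the $K$-quasi-action bound for the middle term, and the same appeal to Lemma \ref{isometry rigidity} to promote the uniform $d_X$-bound on $B$ to an equality of isometries. The only differences are cosmetic (e.g., you explicitly note that $d_X\!\restriction_B \le d_B$ and that multiplicativity yields $\theta(1)=\mathrm{id}_X$), which the paper leaves implicit.
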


\begin{proof}
Let $\gamma_1,\gamma_2 \in \Gamma$. We need to show that $\theta(\gamma_1\gamma_2) = \theta(\gamma_1)\theta(\gamma_2)$. By Lemma \ref{isometry rigidity}, it suffices to produce a constant $c$ such that for all $b \in B$,
\[
    d_X(\theta(\gamma_1\gamma_2)(b),\theta(\gamma_1)\theta(\gamma_2)(b)) \le c.
\]
Since $\psi$ is a $K$-quasi-action, we have that for all $b \in B$,
\begin{align*}
    d_X(\theta(\gamma_1\gamma_2)(b),\theta(\gamma_1)\theta(\gamma_2)(b)) 
&\le
    d_X(\theta(\gamma_1\gamma_2)(b),\psi(\gamma_1\gamma_2)(b)) 
    + 
    d_X(\psi(\gamma_1\gamma_2)(b), \psi(\gamma_1)\psi(\gamma_2)(b))
\\&\qquad +
    d_X(\psi(\gamma_1)\psi(\gamma_2)(b), \theta(\gamma_1)\psi(\gamma_2)(b))
\\&\qquad +
    d_X(\theta(\gamma_1)\psi(\gamma_2)(b), \theta(\gamma_1)\theta(\gamma_2)(b))
\\&\le
    \beta + K + \beta + \beta.
\end{align*}
So set $c = K + 3\beta$.
\end{proof}

Now that we have a homomorphism $\theta : \Gamma \to \text{Isom}(X)$, we can extract information about the algebraic structure of $\Gamma$ by studying the image and kernel of $\theta$.

\section{The image of \texorpdfstring{$\theta$}{theta}}
Let $\Lambda_\Gamma$ denote the image of $\theta$. Our goal is to show that $\Lambda_\Gamma < \text{Isom}(X)$ is a non-uniform lattice that is commensurable with $\Lambda$. Our arguments in this section closely follow those in \cite{FLS}. From now on, unless otherwise stated, the Hausdorff distance $\hdist{\cdot}{\cdot}$ between subsets of $X$ will be considered with respect to $d_X$. 

Recall that a quasi-isometry of the neutered space $B$ coarsely permutes the peripheral horospheres \cite{Schwartz}. The inequality (\ref{isometric extension}) says that the isometry $\theta(\gamma)$ is at finite distance (on $B$) from the quasi-isometry $\psi(\gamma)$. So it follows that $\theta(\gamma)$ also coarsely permutes the peripheral horospheres of $B$. We make this precise in the following lemma. Let $\Omega$ denote the set of peripheral horospheres of $B$.

\begin{lemma}\label{horosphere permutation}
There exists a $\beta'>0$ such that the following holds. For each $\alpha \in \Lambda_\Gamma$ and horosphere $O \in \Omega$, there is a unique horosphere $O' \in \Omega$ such that $\hdist{\alpha(O)}{O'} \le \beta'$.
\end{lemma}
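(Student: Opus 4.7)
The plan is to decompose the lemma into an existence step (producing a candidate $O'$) and a uniqueness step (ruling out any other horosphere nearby), using the Schwartz-type horosphere-permutation result for $\psi(\gamma)$ and the closeness estimate (\ref{isometric extension}) between $\psi(\gamma)$ and $\theta(\gamma)$.

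First, I would recall (from Schwartz \cite{Schwartz}) the statement that any $(K,K)$-quasi-isometry of $B$ coarsely permutes the peripheral horospheres within some Hausdorff error $D_0$ depending only on $K$. Applying this to $\psi(\gamma)$, for every $O \in \Omega$ there exists $O' \in \Omega$ with $\hdist{\psi(\gamma)(O)}{O'} \le D_0$. Since $\psi(\gamma)$ and $\theta(\gamma)$ agree up to error $\beta$ on all of $B$ by (\ref{isometric extension}), and in particular on $O \subset B$, we have $\hdist{\psi(\gamma)(O)}{\theta(\gamma)(O)} \le \beta$. The triangle inequality for the Hausdorff distance then yields
\[
    \hdist{\theta(\gamma)(O)}{O'} \le \beta + D_0.
\]
Given $\alpha \in \Lambda_\Gamma$, choose any $\gamma \in \theta^{-1}(\alpha)$ and apply the above; the bound $\beta + D_0$ depends only on $K$, not on $\gamma$ or $O$, so setting $\beta' \defeq \beta + D_0$ gives existence uniformly.

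For uniqueness, suppose $O_1', O_2' \in \Omega$ both satisfy $\hdist{\alpha(O)}{O_i'} \le \beta'$. Then $\hdist{O_1'}{O_2'} \le 2\beta' < \infty$. By the fact recalled in the preliminaries, two horospheres in $X$ have finite Hausdorff distance if and only if they share a basepoint in $\partial X$. But the defining family of horoballs for $B$ has pairwise disjoint closures, so no two horospheres in $\Omega$ can share a basepoint (concentric open horoballs are nested, forbidding disjoint closures). Hence $O_1' = O_2'$.

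The only genuine input beyond bookkeeping is the invocation of Schwartz's horosphere-permutation theorem for quasi-isometries of $B$; once this is cited, the main step is conceptual rather than computational, namely translating the coarse permutation of horospheres by $\psi(\gamma)$ into one by the honest isometry $\theta(\gamma)$ via (\ref{isometric extension}). The mild subtlety is to make sure the constant is uniform in $\gamma$ and $O$, which it is because the $\psi(\gamma)$ are uniformly $(K,K)$-quasi-isometries and the constant $\beta$ in (\ref{isometric extension}) depends only on $K$.
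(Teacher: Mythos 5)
Your proof follows the same route as the paper: invoke Schwartz's coarse horosphere-permutation theorem for the quasi-isometry $\psi(\gamma)$ to produce a candidate $O'$, then transfer the estimate to the isometry $\theta(\gamma) = \alpha$ via (\ref{isometric extension}) and the triangle inequality for Hausdorff distance, with $\beta'$ uniform since $\beta$ and the Schwartz constant depend only on $K$. The one place you go further is uniqueness: the paper simply asserts it as part of Schwartz's conclusion, whereas you give a clean self-contained argument (finite Hausdorff distance implies a shared basepoint, which is impossible for distinct horospheres in $\Omega$ since their closed horoballs would be nested rather than disjoint) — this is a genuine improvement in rigor, since uniqueness at the threshold $\beta' = \beta + C$ does not formally follow from uniqueness at the smaller threshold $C$ without some such argument.
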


\begin{proof}
Let $\alpha$ and $O$ be given. Then $\alpha = \theta(\gamma)$ for some $\gamma \in \Gamma$. The quasi-isometry $\psi(\gamma)$ permutes the peripheral horospheres of $B$ up to a constant error which depends only on the quasi-isometry constants of $\psi(\gamma)$. Since $\psi$ is a $K$-quasi-action, this means there is a constant $C = C(K)$ and a unique $O' \in \Omega$ such that $\hdist{\psi(\gamma)(O)}{O'} \le C$. From \eqref{isometric extension} we also have $\hdist{\psi(\gamma)(O)}{\theta(\gamma)(O)} \le \beta$. Set $\beta' = \beta + C$. Then $\beta'$ is independent of $\alpha$ and $O$, and
\[
    \hdist{\alpha(O)}{O'} 
\le
    \hdist{\theta(\gamma)(O)}{\psi(\gamma)(O)}
    +
    \hdist{\psi(\gamma)(O)}{O'} 
=
    \beta + C
=
    \beta'.
\]  
\end{proof}

Let $P \subset \partial X$ denote the set of basepoints of the peripheral horospheres of $B$. Since each element of $\Lambda_\Gamma$ coarsely permutes these horospheres, we obtain an action of $\Lambda_\Gamma$ on $P$. 

\begin{lemma}\label{finitely many orbits}
The group $\Lambda_\Gamma$ acts on the set $P$ with finitely many orbits, and given $p \in P$, every element of $\Lambda_\Gamma$ that fixes $p$ under this action also preserves the peripheral horosphere in $\Omega$ based at $p$.
\end{lemma}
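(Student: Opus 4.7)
The plan is to handle well-definedness of the action, the finite-orbit count, and preservation of $O_p$ as three separate steps, with the quasi-inverse of $\psi(\gamma)$ as the key technical tool for the finite-orbit statement.

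Well-definedness will follow directly from Lemma \ref{horosphere permutation}: for $\alpha \in \Lambda_\Gamma$ and $O \in \Omega$ based at $p$, $\alpha(O)$ is a horosphere based at $\alpha(p)$, and the unique $O' \in \Omega$ at Hausdorff distance $\le \beta'$ from $\alpha(O)$ must share its basepoint, since distinct horospheres in $\Omega$ have distinct basepoints and finite Hausdorff distance forces a common basepoint. Hence $\alpha(p) \in P$, the action on $P$ coincides with the restriction of the isometric action of $\Lambda_\Gamma$ on $\partial X$, and associativity holds because $\hdist{\alpha_1\alpha_2(O)}{\alpha_1 \cdot (\alpha_2 \cdot O)} \le 2\beta'$ (using that $\alpha_1$ is an isometry) and uniqueness of the nearby horosphere does not depend on the precise Hausdorff bound.

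For finite orbits, fix $b_0 \in B$ and let $O \in \Omega$ be arbitrary with a chosen $b_O \in O$. Coboundedness of $\psi$ provides $\gamma \in \Gamma$ with $d_B(\psi(\gamma)(b_0), b_O) \le K$. Since $\psi$ is a $K$-quasi-action, the $(K,K)$-quasi-isometry $\psi(\gamma^{-1})$ satisfies $\psi(\gamma^{-1}) \circ \psi(\gamma)$ within $2K$ of $\text{id}_B$, so $\psi(\gamma^{-1})(b_O)$ lies within a constant $R_1 = R_1(K)$ of $b_0$. By \cite{Schwartz}, $\psi(\gamma^{-1})$ coarsely permutes the peripheral horospheres up to a constant $C = C(K)$, so there exists $O'' \in \Omega$ with $\hdist{\psi(\gamma^{-1})(O)}{O''} \le C$; in particular $O''$ passes within $R_1 + C$ of $b_0$. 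Since $X$ is proper and the peripheral horospheres are locally finite, only finitely many horospheres---say $O_1, \ldots, O_N$---have this property. Applying \eqref{isometric extension} gives $\hdist{\theta(\gamma^{-1})(O)}{O''} \le \beta + C$, and by uniqueness of the horosphere in $\Omega$ close to $\theta(\gamma^{-1})(O)$ we conclude $\theta(\gamma^{-1}) \cdot O = O''$; thus $O = \theta(\gamma) \cdot O_i$ for some $i$, so the $\Lambda_\Gamma$-action on $\Omega$ has at most $N$ orbits.

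For the stabilizer statement, suppose $\alpha \in \Lambda_\Gamma$ fixes $p \in P$. Then $\alpha(O_p)$ is a horosphere based at $p$, which lies in the one-parameter family of horospheres at $p$ parametrized by signed displacement along the geodesic to $p$; write $\alpha(O_p) = O_p^{(t)}$ with $\hdist{\alpha(O_p)}{O_p} = |t|$. Lemma \ref{horosphere permutation} gives $|t| \le \beta'$. Iterating, $\alpha^n \in \Lambda_\Gamma$ still fixes $p$ and $\alpha^n(O_p) = O_p^{(nt)}$, forcing $|nt| \le \beta'$ for every $n \in \bZ$; hence $t = 0$ and $\alpha(O_p) = O_p$.

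The anticipated main obstacle is the finite-orbit count. The direct forward approach---transporting some fixed $O_0$ via $\psi(\gamma)$ to land near $O$---runs aground because the quasi-action constant $K$ may exceed the minimum separation $R$ between horospheres, and a single point within $K$ of $b_O \in O$ cannot be unambiguously associated to one horosphere. Pulling back via $\psi(\gamma^{-1})$ circumvents this by confining $O''$ to a compact neighborhood of $b_0$, where local finiteness of the peripheral horospheres immediately bounds the number of candidates.
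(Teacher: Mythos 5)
Your proof is correct, and the overall structure (well-definedness via Lemma \ref{horosphere permutation}, a compactness argument for finitely many orbits, and an iteration argument for the stabilizer) matches the paper's. The finite-orbits step is the one place you genuinely diverge. The paper applies coboundedness directly to the orbit of a point $x\in O$: it finds $\gamma$ with $\psi(\gamma)(x)$ within $K$ of the fixed basepoint $b$, so that $\theta(\gamma)(O)$ is $\beta'$-Hausdorff-close to some $O'\in\Omega$ passing within $K+\beta+\beta'$ of $b$, and there are only finitely many such $O'$. You instead apply coboundedness to the orbit of $b_0$ (finding $\gamma$ with $\psi(\gamma)(b_0)$ near $b_O$) and then pull back through $\psi(\gamma^{-1})$, absorbing the quasi-inverse error into new constants. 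Both land in the same place; yours costs one extra composition and the bookkeeping $R_1 = R_1(K)$, but it is sound. Your stated worry that ``the direct forward approach runs aground'' because $K$ may exceed the separation $R$ is a misreading of what the direct approach does: one never tries to identify which horosphere an image is close to by pointwise proximity---one uses the Hausdorff-distance uniqueness from Lemma \ref{horosphere permutation} exactly as you do---so the paper's one-step version works without the pullback. Your stabilizer argument and the paper's are the same argument in different notation (you parametrize by Busemann displacement, the paper names the translation length $\delta$ and observes $\alpha$ must be hyperbolic if $\delta>0$); both conclude by forcing $|nt|\le\beta'$ for all $n$.
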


\begin{proof}
Let $\alpha \in \Lambda_\Gamma$ and $p \in P$. Let $O \in \Omega$ be the horosphere based at $p$.  By Lemma \ref{horosphere permutation}, there is a unique $O' \in \Omega$ such that $\hdist{\alpha(O)}{O'} < \infty$. Let $p' \in P$ be the basepoint of $O'$. Since the Hausdorff distance between $\alpha(O)$ and $O'$ is finite, the horosphere $\alpha(O)$ is also based at $p'$. So in the extension of $\alpha$ to $\partial X$, we in fact have $\alpha(p) = p'$. Thus $\alpha \cdot p = p'$ defines an action of $\Lambda_\Gamma$ on $ P$.

Next we show that this action has finitely many orbits. Fix a point $b \in B$. For any $r > 0$, $\overline{B_X(b,r)}$ is compact, and since there is a lower bound on the $d_X$-distance between horospheres of $B$, only finitely many of them may intersect $B_X(b,r)$. Hence, we denote by $A \subset \Omega$ the finite subset of horospheres whose $d_X$-distance from $b$ is at most $K + \beta + \beta'$, and let $P_0 \subset P$ be the finite set of basepoints of horospheres in $A$. We show that every orbit in $P$ has a representative in $P_0$. Fix $p \in P$. Let $O \in \Omega$ be the horosphere based at $p$, and let $x \in O$. Since the quasi-action of $\Gamma$ on $B$ is $K$-cobounded, there is a $\gamma \in \Gamma$ such that $d_X(b,\psi(\gamma)(x)) \le d_B(b,\psi(\gamma)(x)) \le K$. Let $\alpha = \theta(\gamma)$. Then $d_X(b,\alpha(x)) \le K + \beta$ by (\ref{isometric extension}). By Lemma \ref{horosphere permutation}, there is a horosphere $O' \in \Omega$ such that the Hausdorff distance between $\alpha(O)$ and $O'$ is at most $\beta'$. In particular, there exists $x' \in O'$ such that $d_X(\alpha(x),x') \le \beta'$. Then
\begin{align*}
    d_X(b,x')
\le
    d_X(b,\alpha(x)) + d_X(\alpha(x),x')
\le
    K + \beta + \beta'.
\end{align*}
So $O' \in A$ and its basepoint $p'$ is in $P_0$. By definition of the action of $\Lambda_\Gamma$ on $P$, we have $\alpha \cdot p = p'$. Hence $p = \alpha^{-1} \cdot p'$, so $p$ is in the orbit of a point in $P_0$. This proves the first part of the statement.

Now let $p \in P$ be the basepoint of $O \in \Omega$ and suppose $\alpha \in \Lambda_\Gamma$ fixes $p$. Since $\alpha \cdot p = p$, the horosphere $\alpha(O)$ is also based at $p$. If $\alpha(O) \ne O$ then $\delta = \hdist{O}{\alpha(O)} > 0$. Moreover, $\alpha$ must be a hyperbolic isometry whose axis has $p$ as an endpoint. So $\hdist{O}{\alpha^n(O)} = n\delta$. Then for large $n$, the Hausdorff distance between $O$ and $\alpha^n(O)$ would exceed $\beta'$, contradicting Lemma \ref{horosphere permutation}. Thus $\alpha(O) = O$.
\end{proof}

Let $P_0 = \s{p_1,\dots,p_k} \subset P$ be a finite set of representatives for the action of $\Lambda_\Gamma$ on $P$, given by Lemma \ref{finitely many orbits}. By removing elements from $P_0$ as necessary, we may assume that no two elements in $P_0$ represent the same orbit. So the orbits of the points in $P_0$ form a partition of $P$. For each $i = 1,\dots,k$, let $O_i \in \Omega$ be the horosphere based at $p_i$, and let $\widehat{O}_i$ be the horosphere contained in the horoball bounded by $O_i$ such that $\hdist{\widehat{O}_i}{O_i} = \beta'$. Let $\widehat{\Omega}$ be the set of horospheres obtained by translating $\widehat{O}_1,\dots,\widehat{O}_k$ by the elements of $\Lambda_\Gamma$. The second part of Lemma \ref{finitely many orbits} guarantees that no two distinct horospheres of $\widehat{\Omega}$ share the same basepoint. Let $\widehat{B}$ denote the complement in $X$ of the union $U$ of the open horoballs bounded by the horospheres in $\widehat{\Omega}$, and endow $\widehat{B}$ with the path-metric induced by $d_X$. Note that $U$ is $\Lambda_\Gamma$-invariant by construction, and so $\widehat{B}$ is $\Lambda_\Gamma$-invariant. It follows that $\Lambda_\Gamma$ acts isometrically on $\widehat{B}$. We observe in the next lemma that $\widehat{B}$ is a thickening of $B$.

\begin{lemma}\label{thick neutered space}
The following containments hold:
\[
    B \subseteq
    \widehat{B} \subseteq
    N_{2\beta'}(B),
\]
where $N_{r}(B) = \s{x \in X : d_X(x,B) \le r}$ for $r > 0$. In particular, $B$ and $\widehat{B}$ are quasi-isometric.
\end{lemma}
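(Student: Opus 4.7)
The plan is to verify the two set-theoretic containments via Busemann-function bookkeeping, and then deduce the quasi-isometry from the containments together with uniform properness.

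For $B \subseteq \widehat{B}$, I would show that each open horoball bounded by a horosphere $\widehat{O} \in \widehat{\Omega}$ sits inside an open horoball bounded by some $O \in \Omega$. Writing $\widehat{O} = \alpha(\widehat{O}_i)$ with $\alpha \in \Lambda_\Gamma$ and $i \in \s{1,\dots,k}$, Lemma \ref{horosphere permutation} pairs $\alpha(O_i)$ with a unique $O \in \Omega$ at Hausdorff distance $\le \beta'$. Since finite Hausdorff distance between horospheres in $X$ forces a common basepoint, both $\alpha(O_i)$ and $O$ are based at $\alpha(p_i)$. Normalizing the Busemann function $b$ at $\alpha(p_i)$ so that $O = \s{b = 0}$, we get $\alpha(O_i) = \s{b = c}$ with $|c| \le \beta'$, and hence $\widehat{O} = \s{b = c - \beta'}$. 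The open horoball $\s{b < c - \beta'}$ bounded by $\widehat{O}$ then lies inside the open horoball $\s{b < 0}$ bounded by $O$, which is excluded from $B$; taking complements yields $B \subseteq \widehat{B}$.

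For $\widehat{B} \subseteq N_{2\beta'}(B)$, I would run the same calculation in reverse. Given $x \in \widehat{B} \setminus B$, the point $x$ lies in some open horoball $\s{b < 0}$ bounded by some $O \in \Omega$. Since every basepoint in $P$ lies in a $\Lambda_\Gamma$-orbit of one of the $p_i$, there is $\alpha \in \Lambda_\Gamma$ with $\alpha(\widehat{O}_i) \in \widehat{\Omega}$ based at the basepoint of $O$, and the Busemann computation again produces a horosphere $\widehat{O} = \s{b = c - \beta'}$ in $\widehat{\Omega}$ with $|c| \le \beta'$ whose open horoball $\s{b < c - \beta'}$ is one of those excluded from $\widehat{B}$. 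Membership $x \in \widehat{B}$ then rules out $x \in \s{b < c - \beta'}$, so $b(x) \ge c - \beta' \ge -2\beta'$. Since $d_X(x, O) = |b(x)|$ and $O \subseteq B$, this gives $d_X(x, B) \le 2\beta'$.

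Finally, for the quasi-isometry I would observe that $\widehat{B}$ is itself a neutered space in $X$: its horospheres are pairwise disjoint, $\Lambda_\Gamma$-equivariant, and inherit a uniform positive pairwise $d_X$-separation from $\Omega$, so Lemma \ref{unif proper} applies to $\widehat{B}$ as well. The inclusion $\iota : (B, d_B) \hookrightarrow (\widehat{B}, d_{\widehat{B}})$ is then $1$-Lipschitz because every $d_B$-path is a $d_{\widehat{B}}$-path, and the second containment combined with uniform properness of $\widehat{B}$ upgrades the $d_X$-density $2\beta'$ to bounded $d_{\widehat{B}}$-density of the image. The main obstacle is verifying that $\iota$ is a quasi-isometric \emph{embedding}, i.e., that $d_B$ is bounded above by a coarsely linear function of $d_{\widehat{B}}|_B$. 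The idea is that any $d_{\widehat{B}}$-path between two points of $B$ enters $\widehat{B}\setminus B$ only at $d_X$-depth $\le 2\beta'$ into horoballs of $\Omega$, so each excursion can be retracted onto the boundary horosphere of $\Omega$ it is closest to at a uniformly bounded additive cost in length, producing a comparable $d_B$-path.
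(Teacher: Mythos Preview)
Your argument for the two containments matches the paper's: both identify, for each $\widehat{O} = \alpha(\widehat{O}_i) \in \widehat{\Omega}$, the unique $O \in \Omega$ sharing its basepoint via Lemma~\ref{horosphere permutation}, and then read off that $\widehat{O}$ sits inside the horoball bounded by $O$ at Hausdorff distance at most $2\beta'$; your Busemann-function bookkeeping just makes the paper's ``contained in the horoball'' step and its triangle-inequality line explicit. For the quasi-isometry, the paper simply asserts that the containments force the inclusion $B \hookrightarrow \widehat{B}$ to be a quasi-isometry, whereas you actually supply the argument. One small correction to your sketch: retracting an excursion at $d_X$-depth $\le 2\beta'$ onto the bounding horosphere of $\Omega$ incurs a uniformly bounded \emph{multiplicative} cost in length (the nearest-point projection from inside a horoball to its bounding horosphere is Lipschitz with constant depending only on the depth, not $1$-Lipschitz), rather than an additive one; this is still exactly what you need for the coarse-linear bound $d_B \le C \cdot d_{\widehat{B}}|_B$.
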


\begin{proof}
Let $\widehat{O} \in \widehat{\Omega}$. Then $\widehat{O}$ is based at some $p \in P$, and we let $O$ be the horosphere in $\Omega$ that is based at $p$. By definition, $\widehat{O} = \alpha(\widehat{O}_i)$ for some $i \in \s{1,\dots,k}$ and $\alpha \in \Lambda_\Gamma$. Recall that $\widehat{O}_i$ is the horosphere contained in the horoball bounded by $O_i$ such that $\hdist{\widehat{O}_i}{O_i} = \beta'$. Since $\alpha$ is an isometry, $\alpha(\widehat{O}_i)$ must be the horosphere contained in the horoball bounded by $\alpha(O_i)$ such that $\hdist{\alpha(\widehat{O}_i)}{\alpha(O_i)} = \beta'$. By Lemma \ref{horosphere permutation}, $\hdist{\alpha(O_i)}{O} \le \beta'$. Altogether, $\widehat{O}$ must be contained in the horoball bounded by $O$, and
\begin{align*}
    \hdist{O}{\widehat{O}}
\le
    \hdist{O}{\alpha(O_i)} +
    \hdist{\alpha(O_i)}{\alpha(\widehat{O}_i)}
\le
    2\beta'.
\end{align*}
The first part of the lemma follows, and so the inclusion $B \hookrightarrow \widehat{B}$ is a quasi-isometry.
\end{proof}

Since $B/\Lambda$ has finitely many boundary components which lift to the peripheral horospheres, there is an $R > 0$ such that the $d_X$-distance between distinct horospheres of $B$ is at least $R$. Then $B \subseteq \widehat{B}$ implies that the $d_X$-distance between distinct elements of $\widehat{\Omega}$ is also at least $R$. We are now equipped to show that $\Lambda_\Gamma$ is a non-uniform lattice.

\begin{proposition}
The group $\Lambda_\Gamma$ is a non-uniform lattice in $\textup{Isom}(X)$, and admits $\widehat{B}$ as its associated neutered space.
\end{proposition}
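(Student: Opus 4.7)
My goal is to verify that $\Lambda_\Gamma$ acts geometrically by isometries on $\widehat{B}$ — cocompactly and properly discontinuously — and then conclude via standard truncated-cusp arguments that $\Lambda_\Gamma$ is a non-uniform lattice in $\textup{Isom}(X)$ whose associated neutered space is $\widehat{B}$. The isometric action of $\Lambda_\Gamma$ on $\widehat{B}$ is already established in the paragraph preceding the proposition, so what remains are cocompactness, proper discontinuity, and finite covolume.

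\textbf{Cocompactness on $\widehat{B}$.} Fix $b_0 \in B \subset \widehat{B}$. Given any $x \in \widehat{B}$, Lemma~\ref{thick neutered space} yields $b \in B$ with $d_X(x,b) \le 2\beta'$, and $K$-coboundedness of $\psi$ yields $\gamma \in \Gamma$ with $d_B(\psi(\gamma)(b_0), b) \le K$. Combining with \eqref{isometric extension} and the fact that $d_X \le d_B$ gives
\[
d_X(\theta(\gamma)(b_0), x) \le \beta + K + 2\beta'.
\]
Hence $\widehat{B}$ is covered by $\Lambda_\Gamma$-translates of the compact set $\overline{B_X(b_0, \beta + K + 2\beta')} \cap \widehat{B}$.

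\textbf{Proper discontinuity.} This is the main obstacle. For a compact $K \subset \widehat{B}$ with $b_0$ in its interior, any $\alpha = \theta(\gamma) \in \Lambda_\Gamma$ with $\alpha(K) \cap K \ne \emptyset$ forces $d_X(\psi(\gamma)(b_0), b_0)$ to be uniformly bounded (via \eqref{isometric extension}), and hence $d_B(\psi(\gamma)(b_0), b_0) \le C$ for a uniform $C$ (via Lemma~\ref{unif proper}). The subtlety is that although every such $\alpha$ admits a lift $\gamma$ with this property, distinct $\alpha$ correspond to distinct cosets of $\ker\theta$, and one must bound the number of such cosets to deduce discreteness. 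I will do this by appealing to the quasi-isometry $\phi : \Gamma \to \Lambda \times L$: under this identification, $\psi(\gamma)$ is coarsely the deck-transformation action on $B$ by the $\Lambda$-coordinate of $\phi(\gamma)$, so a $d_B$-bound on $\psi(\gamma)(b_0)$ confines that coordinate to a word-metric ball in $\Lambda$, which is finite. Since the $L$-direction is coarsely absorbed into $\ker\theta$, cosets of $\ker\theta$ are parametrized by $\Lambda$-coordinates, and so only finitely many such cosets can satisfy the bound. This separation of the $B$- and $N$-factors, which is implicit in the coarse projection lemma but needs to be made explicit via $\phi$, is the heart of the argument.

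\textbf{Finite covolume and non-uniformity.} With geometricity of the $\Lambda_\Gamma$-action on $\widehat{B}$ in hand, $\widehat{B}/\Lambda_\Gamma$ is compact, and Lemma~\ref{finitely many orbits} shows there are only finitely many $\Lambda_\Gamma$-orbits of horospheres in $\widehat{\Omega}$. The stabilizer of each horosphere acts cocompactly on it by parabolic isometries, so each removed horoball contributes a finite-volume cusp to $X/\Lambda_\Gamma$. Summing the finitely many cusp volumes with the finite volume of the compact piece $\widehat{B}/\Lambda_\Gamma$ yields that $X/\Lambda_\Gamma$ has finite volume but is noncompact. Therefore $\Lambda_\Gamma$ is a non-uniform lattice in $\textup{Isom}(X)$, and $\widehat{B}$ is its associated neutered space by construction.
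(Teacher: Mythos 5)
Your cocompactness and finite-covolume arguments are essentially those of the paper and are fine. The problem is the proper discontinuity step, which you yourself flag as the ``heart of the argument,'' and it contains a genuine gap.

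Your plan is: bound $d_B(\psi(\gamma)(b_0),b_0)$ for any $\gamma$ whose image $\alpha=\theta(\gamma)$ returns $K$ to meet $K$, then argue via the quasi-isometry $\phi:\Gamma\to\Lambda\times L$ that this bound confines ``the $\Lambda$-coordinate of $\phi(\gamma)$'' to a finite ball in $\Lambda$, and then conclude that only finitely many cosets of $\ker\theta$ occur. The difficulty is that $\phi$ is merely a quasi-isometry, not a homomorphism, so ``the $\Lambda$-coordinate of $\phi(\gamma)$'' has no algebraic content; in particular the claim that ``cosets of $\ker\theta$ are parametrized by $\Lambda$-coordinates'' is exactly the kind of statement that needs proof, and it is not a consequence of the Kapovich--Kleiner--Leeb coarse projection. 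Worse, the natural route to establishing such a statement is to show that $\ker\theta$ is (coarsely) the $N$-fiber direction, and the paper's proof of that fact (Proposition~\ref{kernel action}, via the finiteness of $A=\Lambda_\Gamma\cap C$) invokes the discreteness of $\Lambda_\Gamma$, which is precisely what you are trying to prove. So the fix you sketch is either circular or requires an independent argument you have not supplied. Bounding a single orbit point $d_X(\theta(\gamma)(b_0),b_0)\le C'$ only places $\theta(\gamma)$ in a compact subset of $\textup{Isom}(X)$; without discreteness this compact set can still contain infinitely many elements of $\Lambda_\Gamma$.

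The paper avoids this entirely by proving discreteness directly from the rigid combinatorics of the horospheres: one chooses $m+1$ horospheres of $\widehat{\Omega}$ whose basepoints are in general position (so that the only isometry fixing all of them is the identity), and uses the $R$-separation of horospheres in $\widehat{\Omega}$ to define a small open neighborhood $U$ of $\textup{id}$ in the compact-open topology such that any $\alpha\in\Lambda_\Gamma\cap U$ must actually fix each of those horospheres, hence equals $\textup{id}_X$. If you want to keep the proper-discontinuity framing, the correct input is this horosphere rigidity: an $\alpha=\theta(\gamma)$ with $\alpha(K)\cap K\ne\emptyset$ moves each of finitely many designated horospheres near $K$ a bounded distance, and since these horospheres are $R$-separated and their basepoints are in general position, there are only finitely many possible images, hence finitely many possible $\alpha$. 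That replaces your appeal to $\phi$ with the tool the paper actually relies on (Lemmas~\ref{horosphere permutation}, \ref{finitely many orbits}, and the rigidity idea behind Lemma~\ref{isometry rigidity}).
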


\begin{proof}
We first show that $\Lambda_\Gamma$ is a discrete subgroup. Recall that $P \subset \partial X$ is a dense subset. So pick horospheres $\widehat{O}_1,\dots,\widehat{O}_{m+1}$ in $\widehat{\Omega}$, where $m = \dim X$, with basepoints $p_1,\dots,p_{m+1}$ such that $\s{p_1,\dots,p_{m+1}}$ is not contained in the ideal boundary of any hyperplane of $X$. Then the only isometry of $X$ whose extension to $\partial X$ fixes every $p_i$ is the identity map. For each $i=1,\dots,m+1$, pick some $x_i \in \widehat{O}_i$, and set
\[
    U = \s{\alpha \in \text{Isom}(X) : d_X(\alpha(x_i),x_i) < R \text{ for } i = 1,\dots,m+1}.
\]
Then $U \subset \text{Isom}(X)$ is an open neighborhood of the identity, with respect to the compact-open topology. Let $\alpha \in \Lambda_\Gamma \cap U$. Since $\alpha \in \Lambda_\Gamma$, $\alpha$ permutes $\widehat{\Omega}$. Since $\alpha \in U$, $d_X(\alpha(\widehat{O}_i),\widehat{O}_i) < R$. Together, these imply that $\alpha(\widehat{O}_i) = \widehat{O}_i$, and hence $\alpha \cdot p_i = p_i$, for each $i=1,\dots,m+1$. So $\alpha = \text{id}_X$ and $\Lambda_\Gamma \cap U = \s{\text{id}_X}$. Thus $\Lambda_\Gamma$ is a discrete subgroup.

Next we show that $X/\Lambda_\Gamma$ has finite volume. Recall that the quasi-action of $\Gamma$ on $B$ is $K$-cobounded. It follows from the definition of $\Lambda_\Gamma$ and the containment $\widehat{B} \subseteq N_{2\beta'}(B)$ that the $\Lambda_\Gamma$-orbit of a (every) point in $\widehat{B}$ is $(K+3\beta')$-dense in $\widehat{B}$. Hence the quotient orbifold $\widehat{B}/\Lambda_\Gamma$ is compact, and by Lemma \ref{finitely many orbits}, it has finitely many boundary components $V_1,\dots,V_j$. Suppose $\widehat{O}_i \in \widehat{\Omega}$ projects onto the boundary component $V_i$. Note that $\alpha(\widehat{O}_i) \cap \widehat{O}_i \ne \emptyset$ if and only if $\alpha(\widehat{O}_i) = \widehat{O}_i$ if and only if $\alpha$ is in the stabilizer ${\Lambda_\Gamma}_i$ of the basepoint of $\widehat{O}_i$. The subset $V_i = \widehat{O}_i/{\Lambda_\Gamma}_i$ of the compact quotient $\widehat{B}/\Lambda_\Gamma$ is closed, and therefore, compact. Let $W_i \subset X$ be the horoball bounded by $\widehat{O}_i$. Then $W_i/{\Lambda_\Gamma}_i$ has finite volume, and since $\parens{\bigcup_{i=1}^j W_i} \cup \widehat{B}$ projects surjectively onto $X/\Lambda_\Gamma$, we conclude that $X/\Lambda_\Gamma$ has finite volume. Thus $\Lambda_\Gamma$ is a non-uniform lattice.
\end{proof}

We are now in a position to use Schwartz' rigidity theorem for non-uniform rank one lattices.

\begin{corollary}\label{commensurable}
The group $\Lambda_\Gamma$ is commensurable with $\Lambda$.
\end{corollary}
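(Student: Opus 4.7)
The plan is to verify that $\Lambda_\Gamma$ is quasi-isometric to $\Lambda$ as finitely generated groups, and then to invoke Schwartz's rigidity theorem (Theorem \ref{Schwartz-theorem}) to upgrade this quasi-isometry to commensurability.

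First I would assemble a chain of quasi-isometries connecting $\Lambda_\Gamma$ to $\Lambda$. The preceding proposition established that $\Lambda_\Gamma$ is a discrete subgroup of $\Isom{X}$ with associated neutered space $\widehat{B}$, and that $\widehat{B}/\Lambda_\Gamma$ is compact; since $\Lambda_\Gamma$ is discrete in $\Isom{X}$ and $\widehat{B}$ is a closed $\Lambda_\Gamma$-invariant subspace of $X$, the action of $\Lambda_\Gamma$ on the proper geodesic space $\widehat{B}$ is geometric. By the Milnor--Schwarz lemma, $\Lambda_\Gamma$ is finitely generated and quasi-isometric to $\widehat{B}$. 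The same lemma applied to the geometric action of (a torsion-free finite-index subgroup of) $\Lambda$ on $B$ gives that $\Lambda$ is quasi-isometric to $B$. By Lemma \ref{thick neutered space}, $B$ and $\widehat{B}$ are quasi-isometric. Composing these quasi-isometries yields that $\Lambda$ and $\Lambda_\Gamma$ are quasi-isometric.

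Next I would apply Theorem \ref{Schwartz-theorem} to the finitely generated group $\Lambda_\Gamma$, which is quasi-isometric to the non-uniform rank one lattice $\Lambda$ in $\Isom{X}$. The conclusion is a short exact sequence
\[
    1 \to F \to \Lambda_\Gamma \to \Lambda'' \to 1
\]
with $F$ finite and $\Lambda'' \le \Isom{X}$ a non-uniform lattice commensurable with $\Lambda$. Since $F$ is finite, $\Lambda_\Gamma$ is commensurable with $\Lambda''$, hence with $\Lambda$.

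The substantive work has all been done upstream, namely in identifying $\widehat{B}$ as a neutered space for $\Lambda_\Gamma$ and in comparing it to $B$. Consequently no genuine obstacle remains; the corollary is essentially a direct application of Schwartz's theorem to the quasi-isometry class identified above.
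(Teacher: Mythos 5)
Your argument is correct and follows the same route as the paper: use Milnor--Schwarz on the geometric actions $\Lambda_\Gamma \curvearrowright \widehat{B}$ and $\Lambda \curvearrowright B$ together with Lemma \ref{thick neutered space} to see that $\Lambda_\Gamma$ and $\Lambda$ are quasi-isometric, then invoke Schwartz's rigidity theorem. One small remark: your final step, extracting commensurability of $\Lambda_\Gamma$ with $\Lambda''$ from the short exact sequence $1 \to F \to \Lambda_\Gamma \to \Lambda'' \to 1$, implicitly uses that $\Lambda_\Gamma$ is virtually torsion-free (or residually finite), which holds by Selberg's lemma since $\Lambda_\Gamma$ is a lattice in the linear Lie group $\Isom{X}$; the paper sidesteps this by appealing directly to Schwartz's stronger result that two quasi-isometric non-uniform lattices in the same $\Isom{X}$ are commensurable.
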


\begin{proof}
Recall that $B$ and $\widehat{B}$ are quasi-isometric by Lemma \ref{thick neutered space}. Since $\Lambda$ acts geometrically on $B$ and $\Lambda_\Gamma$ acts geometrically on $\widehat{B}$, the Milnor-Schwarz lemma implies that the two non-uniform lattices $\Lambda$ and $\Lambda_\Gamma$ are quasi-isometric. Then the corollary follows from Schwartz' theorem.
\end{proof}

\section{The kernel of \texorpdfstring{$\theta$}{theta}}
Next we analyze the kernel of $\theta$, and our goal is to show that $\ker \theta$ is virtually a nilpotent lattice. To this end, we first show that via the quasi-action $h$ of $\Gamma$ on $B \times N$, each $\gamma \in \ker\theta$ coarsely preserves the fibers $\s{b} \times N \subset B \times N$. 

\begin{lemma}\label{fiber stability}
There is a $\delta > 0$ such that for each $\gamma \in \ker\theta$ the following holds. For every fiber $F = \s{b} \times N \subset B \times N$ and every $x \in F$, we have $h(\gamma)(x) \in N_\delta(F)$
\end{lemma}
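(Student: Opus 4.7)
The plan is to combine three ingredients: the near-commutativity of $\pi$ with the quasi-actions, the hypothesis $\theta(\gamma)=\mathrm{id}_X$, and the uniform properness of the identity $(B,d_B)\to(B,d_X\vert_B)$ from Lemma \ref{unif proper}. The conclusion follows as soon as one controls the $B$-coordinate of $h(\gamma)(x)$.

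Fix $\gamma \in \ker\theta$, so $\theta(\gamma)=\mathrm{id}_X$. Inequality (\ref{isometric extension}) then reads
\[
  d_X\bigl(\psi(\gamma)(b),b\bigr) \le \beta \qquad \text{for all } b \in B.
\]
The first step is to upgrade this $d_X$-bound to a $d_B$-bound. Since Lemma \ref{unif proper} says the identity $(B,d_B)\to(B,d_X\vert_B)$ is uniformly proper, there is a function $\zeta\colon \bR_+\to\bR_+$ such that any two points of $B$ at $d_X$-distance $\le\beta$ are at $d_B$-distance $\le \zeta(\beta)$. In particular $d_B(\psi(\gamma)(b),b)\le\zeta(\beta)$ for every $b\in B$.

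The second step uses the near-commutativity of the projection diagram. For $x=(b,n)\in F=\{b\}\times N$,
\[
  d_B\bigl(\pi[h(\gamma)(x)],\,\psi(\gamma)(b)\bigr)
  = d_B\bigl(\pi[h(\gamma)(b,n)],\,\psi(\gamma)(\pi[(b,n)])\bigr)
  \le D,
\]
so combining with the previous paragraph gives
\[
  d_B\bigl(\pi[h(\gamma)(x)],\,b\bigr) \le D + \zeta(\beta).
\]
Writing $h(\gamma)(x) = (b',n')$, this is exactly $d_B(b',b) \le D + \zeta(\beta)$. Since the product metric on $B\times N$ dominates the $B$-coordinate, the point $(b,n')\in F$ satisfies
\[
  d\bigl(h(\gamma)(x),(b,n')\bigr) \le d_B(b',b) \le D + \zeta(\beta),
\]
so $h(\gamma)(x)\in N_\delta(F)$ for $\delta := D + \zeta(\beta)$. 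This $\delta$ depends only on the quasi-action constant $K$ (through $\beta$ and $D$) and on $B$ itself, not on $\gamma$ or on the fiber.

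The only subtle step is the passage from $d_X$ to $d_B$: a priori $\psi(\gamma)(b)$ and $b$ could be joined in $X$ by a short geodesic that cuts through a deep horoball, so that their $d_B$-distance is enormous. Lemma \ref{unif proper} is precisely what rules this out, and it is the essential input that makes this fiber-stability statement work for the intrinsic metric of the neutered space.
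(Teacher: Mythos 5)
Your proof is correct and follows essentially the same route as the paper: use $\theta(\gamma)=\mathrm{id}_X$ together with inequality \eqref{isometric extension} to bound $d_X(\psi(\gamma)(b),b)$ by $\beta$, invoke Lemma \ref{unif proper} to upgrade this to a $d_B$-bound, and then combine with the $D$-near-commutativity of the projection diagram via the triangle inequality. The only cosmetic difference is that the paper names the uniform-properness constant $\delta'$ while you write $\zeta(\beta)$, and the paper finishes with the observation that $d_B(\pi[h(\gamma)(x)],b_0)\le\delta$ is equivalent to $h(\gamma)(x)\in N_\delta(F)$ rather than exhibiting the nearby point $(b,n')\in F$ explicitly; these are the same argument.
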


\begin{proof}
Fix $\gamma \in \ker \theta$. Then $\theta(\gamma) = \text{id}_X$ and so $d_X(\psi(\gamma)(b),b) \le \beta$ for all $b \in B$. By Lemma \ref{unif proper}, there exists $\delta' > 0$ so that in $B$, every $d_X$-ball of radius $\beta$ is contained in a $d_B$-ball of radius $\delta'$. Then $d_B(\psi(\gamma)(b),b) \le \delta'$ for all $b \in B$. Take a fiber $F = \s{b_0} \times N \subset B \times N$ and let $x \in F$. Then
\begin{align*}
    d_B(\pi[h(\gamma)(x)],b_0)
&\le
    d_B(\pi[h(\gamma)(x)], \psi(\gamma)(\pi[x]))
    +
    d_B(\psi(\gamma)(b_0),b_0)
\le
    D + \delta'.
\end{align*}
Set $\delta = D + \delta'$ and note that this value does not depend on the choice of $\gamma$. Then observe that $d_B(\pi[h(\gamma)(x)],b_0) \le \delta$ if and only if $h(\gamma)(x) \in N_\delta(F)$.
\end{proof}

Fix a fiber $F_0 = \s{b_0} \times N \subset B \times N$. For $\gamma \in \ker\theta$ and $x \in F_0$, Lemma \ref{fiber stability} guarantees that there exists a point $f(\gamma)(x)$ in $F_0$ such that $d(f(\gamma)(x),h(\gamma)(x)) \le \delta$. So for each $\gamma \in \ker \theta$ we have a map $f(\gamma) : F_0 \to F_0$.

\begin{proposition}\label{kernel action}
The map $\gamma \mapsto f(\gamma)$ is a cobounded quasi-action of $\ker\theta$ on $F_0$.
\end{proposition}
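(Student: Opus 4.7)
The plan is to verify in order the three defining conditions of a $K$-quasi-action, and then coboundedness. Each of the first three reduces to the corresponding property of the $\Gamma$-quasi-action $h$ on $B \times N$ by the same triangle-inequality estimates used in Section 3 to pass from $h$ to $\psi$, with the projection error $D$ replaced by the fiber error $\delta$ of Lemma \ref{fiber stability}. Concretely, inserting $h(\gamma)(x)$ alongside $f(\gamma)(x)$ gives $|d(f(\gamma)(x), f(\gamma)(y)) - d(h(\gamma)(x), h(\gamma)(y))| \le 2\delta$, so $f(\gamma)$ is a quasi-isometric embedding with uniform constants; the identity bound is $d(f(1)(x), x) \le \delta + k$; and the composition bound telescopes through the intermediate points $h(\gamma_1\gamma_2)(x)$, $h(\gamma_1) h(\gamma_2)(x)$, and $h(\gamma_1)(f(\gamma_2)(x))$. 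Coarse surjectivity of $f(\gamma)$, needed to land in $\textup{QI}(F_0)$ rather than merely the set of quasi-isometric embeddings, I would verify by exhibiting $f(\gamma^{-1})$ as a coarse inverse via the analogous telescoping.

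The hard part is coboundedness, which is not automatic because $F_0$ might a priori decompose into several $\ker\theta$-orbits. Given $y \in F_0$, the coboundedness of $h$ furnishes $\gamma' \in \Gamma$ with $d(h(\gamma')(x_0), y) \le k$. Since $y$ lies in $F_0$, the point $h(\gamma')(x_0)$ lies within $k$ of $F_0$, so projecting and applying \eqref{isometric extension} forces $d_X(\theta(\gamma')(b_0), b_0) \le k + D + \beta$. Thus $\theta(\gamma')$ lies in the set
\[
    S \defeq \s{\alpha \in \Lambda_\Gamma : d_X(\alpha(b_0), b_0) \le k + D + \beta},
\]
which is finite because $\Lambda_\Gamma$ is discrete in $\textup{Isom}(X)$ while the stabilizer of $b_0$ in $\textup{Isom}(X)$ is compact.

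The plan for correcting $\gamma'$ into $\ker\theta$ is to use a fixed coset section. Choose once and for all, for each $\alpha \in S$, a preimage $\tau_\alpha \in \theta^{-1}(\alpha^{-1})$ with $\tau_{\textup{id}} = 1$, and set $c \defeq \max_{\alpha \in S} d(h(\tau_\alpha)(x_0), x_0)$, which is finite because $S$ is. Then $\gamma_0 \defeq \gamma' \tau_{\theta(\gamma')}$ lies in $\ker\theta$, and the composition axiom for $h$ together with the quasi-Lipschitz property of $h(\gamma')$ yield
\[
    d(h(\gamma_0)(x_0), y) \;\le\; k + (kc + k) + k \;=\; 3k + kc,
\]
hence $d(f(\gamma_0)(x_0), y) \le \delta + 3k + kc$. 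This uniform bound exhibits one coarsely dense $\ker\theta$-orbit in $F_0$, and Lemma \ref{one dense orbit} then upgrades this to full coboundedness of $f$. The key point of the argument is that the ambiguity in lifting $\Lambda_\Gamma$ to $\Gamma$ is harmless: only finitely many ``near-stabilizer'' cosets can contribute points near $F_0$, so a single finite choice of section representatives $\{\tau_\alpha\}_{\alpha \in S}$ suffices to swallow the correction.
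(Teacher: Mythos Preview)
Your proof is correct and follows essentially the same route as the paper's: the three quasi-action axioms are verified by the same triangle-inequality telescoping through $h$ with error $\delta$, and coboundedness is obtained by the same finite-coset correction argument, using discreteness of $\Lambda_\Gamma$ to bound the set of isometries moving $b_0$ a bounded amount. The only cosmetic differences are that the paper verifies coarse surjectivity of each $f(\gamma)$ directly (by pulling an arbitrary $z\in F_0$ back through $h(\gamma)$ and projecting to $F_0$) rather than via your coarse-inverse argument, and the paper's section elements are $\gamma_a\in\theta^{-1}(a)$ with correction $\gamma\gamma_a^{-1}$ in place of your $\tau_\alpha\in\theta^{-1}(\alpha^{-1})$ with correction $\gamma'\tau_{\theta(\gamma')}$; these are the same thing under the substitution $\tau_a=\gamma_a^{-1}$.
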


\begin{proof}
First we show that $f(\gamma)$ is a quasi-isometry.
Fix $\gamma \in \ker\theta$, and let $x,y \in F_0$. Then
\begin{align*}
    d(f(\gamma)(x),f(\gamma)(y))
&\le
    d(h(\gamma)(x),h(\gamma)(y)) + 2\delta
\\&\le
    kd(x,y) + k + 2\delta
\end{align*}
and
\begin{align*}
    d(f(\gamma)(x),f(\gamma)(y)) 
&\ge
    d(h(\gamma)(x),h(\gamma)(y)) - 2\delta
\\&\ge
    \frac{1}{k}d(x,y) - k - 2\delta
\end{align*}
Let $z \in F_0$. Since $h(\gamma)$ has $k$-dense image, there is some $y = (b,n) \in B \times N$ for which $d(z,h(\gamma)(y)) \le k$. By Lemma \ref{fiber stability}, $d_B(\pi[h(\gamma)(y)],b) \le \delta$. So it turns out that $b$ cannot be too far from $b_0$. Indeed,
\begin{align*}
    d_B(b_0,b)
&\le
    d_B(b_0,\pi[h(\gamma)(y)]) + d_B(\pi[h(\gamma)(y)],b)
\\&\le
    d(z,h(\gamma)(x)) + \delta
\\&\le
    k + \delta.
\end{align*}
Set $x = (b_0,n)$. Then $d(x,y) = d_B(b_0,b) \le k + \delta$, and so
\begin{align*}
    d(z,f(\gamma)(x))
&\le
     d(z,h(\gamma)(y)) 
    +
    d(h(\gamma)(y), h(\gamma)(x))
    +
    d(h(\gamma)(x),f(\gamma)(x))
\\&\le
    k + (kd(x,y) + k) + \delta
\\&\le
    k + k(k+\delta) + k + \delta.
\end{align*}
It follows that $f(\gamma)$ is a quasi-isometry.

For all $x \in F_0$,
\[
    d(f(1)(x),x) \le
    d(f(1)(x),h(1)(x)) + d(h(1)(x),x) \le
    \delta + k.
\]
Let $\gamma_1,\gamma_2 \in \Gamma$ and $x \in F_0$. Then
\begin{align*}
    d(f(\gamma_1\gamma_2)(x),f(\gamma_1)f(\gamma_2)(x))
&\le 
    d(f(\gamma_1\gamma_2)(x),h(\gamma_1\gamma_2)(x))
    +
    d(h(\gamma_1\gamma_2)(x),f(\gamma_1)f(\gamma_2)(x))
\\&\le
    \delta 
    +
    d(h(\gamma_1\gamma_2)(x),h(\gamma_1)h(\gamma_2)(x))
\\&\qquad +
    d(h(\gamma_1)h(\gamma_2)(x),f(\gamma_1)f(\gamma_2)(x))
\\&\le
    \delta + k
    +
    d(h(\gamma_1)h(\gamma_2)(x),h(\gamma_1)f(\gamma_2)(x))
\\&\qquad+
    d(h(\gamma_1)f(\gamma_2)(x),f(\gamma_1)f(\gamma_2)(x))
\\&\le
    \delta+k
    +
    [kd(h(\gamma_2)(x),f(\gamma_2)(x))+k]
    +
    \delta
\\&\le
    \delta + k + k\delta + k + \delta.
\end{align*}
It follows that $f$ is a quasi-action, and it remains to show that $f$ is cobounded. 

Recall that it suffices to show that the orbit of $x_0 = (b_0,e) \in F_0$ is $r$-dense in $F_0$ for some $r > 0$. First we claim that if $\gamma \in \Gamma$ is such that $h(\gamma)(x_0) \in N_k(F_0)$, then $\theta(\gamma)$ moves $b_0$ by a distance which is bounded independently of $\gamma$. Indeed, if $h(\gamma)(x_0) \in N_k(F_0)$ then 
\begin{align*}
    d_X(\theta(\gamma)(b_0),b_0) &\le
    d_X(\theta(\gamma)(b_0),\psi(\gamma)(b_0)) + d_B(\psi(\gamma)(\pi[x_0]),\pi[h(\gamma)(x_0)]) + d_B(\pi[h(\gamma)(x_0)],\pi[x_0])
    \\&\le
    \beta + D + k.
\end{align*}
By an Arzel\`a-Ascoli theorem, the isometries of $X$ which move $b_0$ by a distance at most $\beta+D+k$ lie in a compact subset $C$ of the compact-open topology on $\text{Isom}(X)$. Since $\Lambda_\Gamma \subset \text{Isom}(X)$ is discrete, the intersection $A = \Lambda_\Gamma \cap C$ is finite. In summary, $h(\gamma)(x_0) \in N_k(F_0)$ implies $\theta(\gamma) \in A$. For each $a \in A$, pick $\gamma_a \in \theta^{-1}(\s{a})$ and set $M = \max_{a \in A} d(x_0,h(\gamma_a^{-1})(x_0))$.

Now, let $y \in F_0$ be arbitrary. Since the quasi-action $h$ of $\Gamma$ on $B \times N$ is $k$-cobounded, there is a $\gamma \in \Gamma$ such that $d(y,h(\gamma)(x_0)) \le k$. Then $h(\gamma)(x_0) \in N_k(F_0)$, so $\theta(\gamma)$ is equal to some $a \in A$. Then $\gamma\gamma_a^{-1} \in \ker\theta$ and
\begin{align*}
    d(y,h(\gamma\gamma_a^{-1})(x_0))
&\le
    d(y,h(\gamma)h(\gamma_a^{-1})(x_0)) + k
\\&\le
    d(y,h(\gamma)(x_0)) + d(h(\gamma)(x_0),h(\gamma)h(\gamma_a^{-1})(x_0)) + k
\\&\le
    k d(x_0,h(\gamma_a^{-1})(x_0)) + 3k
\\&\le
    k M + 3k.
\end{align*}
Hence, $d(y,f(\gamma\gamma_a^{-1})(x_0)) \le k M + 3k + \delta$, and so the $(\ker\theta)$-orbit of $x_0$ is $(k M + 3k + \delta)$-dense in $F_0$.

\end{proof}

\begin{proposition}\label{kernel qi to N}
The group $\ker\theta$ is finitely generated and quasi-isometric to $N$.
\end{proposition}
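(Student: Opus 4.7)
The plan is to invoke the strong Milnor--Schwarz lemma (Lemma 1.4, quoted from \cite{FLS}) applied to the cobounded quasi-action $f$ of $\ker\theta$ on $F_0$ supplied by Proposition \ref{kernel action}. Since $F_0 = \{b_0\} \times N$ is isometric to $N$, this will immediately give that $\ker\theta$ is finitely generated and quasi-isometric to $N$. The only remaining hypothesis to verify is local finiteness: for each $r > 0$, the set
\[
    S_r = \{\gamma \in \ker\theta : f(\gamma)B(x_0, r) \cap B(x_0, r) \ne \emptyset\}
\]
is finite.

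I would derive this from the analogous property for the ambient quasi-action $h$ of $\Gamma$ on $B \times N$. Recall from Lemma \ref{induced quasi-action} that $h$ was built from a quasi-isometry $\phi : \Gamma \to B \times N$ with quasi-inverse $\psi$ via the formula $h(g)(x) = \phi(g\psi(x))$. After possibly replacing $x_0$ with another basepoint at bounded distance (which changes the constants but not the conclusion), we may assume $x_0 = \phi(1)$, so that the orbit map $\gamma \mapsto h(\gamma)(x_0) = \phi(\gamma\psi(x_0))$ factors as $\phi$ composed with right translation by $\psi(x_0)$ in $\Gamma$. Since right translation by a fixed group element is a $(1, 2|\psi(x_0)|_S)$-quasi-isometry of $\Gamma$, the orbit map is a quasi-isometric embedding; in particular, preimages of bounded subsets of $B \times N$ under it are finite subsets of $\Gamma$.

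To finish, I would transfer local finiteness from $h$ down to $f$. If $\gamma \in S_r$ and $y \in B(x_0, r)$ with $f(\gamma)(y) \in B(x_0, r)$, then $d(h(\gamma)(y), f(\gamma)(y)) \le \delta$ by construction of $f$, so $h(\gamma)(y) \in B(x_0, r + \delta)$; the $(k,k)$-quasi-isometry bound for $h(\gamma)$ then yields
\[
    d(h(\gamma)(x_0), x_0) \le d(h(\gamma)(x_0), h(\gamma)(y)) + (r + \delta) \le (k+1)r + k + \delta.
\]
Hence $S_r$ lies inside the finite preimage under the orbit map of $B(x_0, (k+1)r + k + \delta)$, and Lemma 1.4 completes the proof. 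The only delicate point is bookkeeping around basepoints; no step is genuinely hard, and the heart of the argument is the approximation bound $d(h(\gamma), f(\gamma)) \le \delta$ on $F_0$, which lets us pull local finiteness from the ambient quasi-action on $B \times N$ down to the fiber.
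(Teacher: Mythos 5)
Your proof is correct and follows essentially the same route as the paper: both reduce to verifying the local finiteness hypothesis of the strong Milnor--Schwarz lemma for the cobounded quasi-action $f$ of $\ker\theta$ on $F_0$, and both transfer that hypothesis from $h$ to $f$ via the bound $d(f(\gamma)(x), h(\gamma)(x)) \le \delta$ on $F_0$. The paper phrases the local finiteness for $h$ directly in terms of the finiteness of $\phi^{-1}(B(x_0,r))$ and of $\{\gamma' : \phi(\gamma') \in B(x_0,r)\}$; you repackage this as the observation that the orbit map $\gamma \mapsto h(\gamma)(x_0) = \phi(\gamma\,\phi^{-1}(x_0))$ is a quasi-isometric embedding (being $\phi$ precomposed with right translation), so preimages of bounded sets are finite. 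These are the same argument with different bookkeeping, and your version of the transfer step (showing $S_r$ lands in the preimage of a ball of radius $(k+1)r+k+\delta$) is a clean unpacking of what the paper leaves implicit. One small caution on notation: in this paper $\psi$ already denotes the projected quasi-action of $\Gamma$ on $B$, and the quasi-inverse of $\phi$ is written $\phi^{-1}$; reusing $\psi$ for the quasi-inverse risks confusion. Also, the detour through replacing $x_0$ by $\phi(1)$ is unnecessary: the factorization of the orbit map through right translation by $\phi^{-1}(x_0)$ works for any basepoint $x_0 \in F_0$.
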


\begin{proof}
Set $x_0 = (b_0,e) \in F_0$. Since $f$ is a cobounded quasi-action of $\ker\theta$ on $F_0$, and $F_0$ is isometric to $N$, the proposition follows from the stronger version of the Milnor-Schwarz lemma once we show that for $r > 0$, the set $\s{\gamma \in \ker\theta : f(\gamma)B_{F_0}(x_0,r) \cap B_{F_0}(x_0,r) \ne \emptyset}$ is finite. By definition, $d(f(\gamma)(x),h(\gamma)(x)) \le \delta$ for $\gamma \in \ker\theta$ and $x \in F_0$. So it suffices to show that for $r > 0$, the set $\s{\gamma \in \Gamma : h(\gamma)B(x_0,r) \cap B(x_0,r) \ne \emptyset}$ is finite, where $B(x_0,r) = B_{B\times N}(x_0,r)$. Recall that $h$ is defined by $h(\gamma)(x) = \phi(\gamma\phi^{-1}(x))$ for $\gamma \in \Gamma$ and $x \in B \times N$, where $\phi : \Gamma \to B \times N$ is a quasi-isometry and $\phi^{-1}$ is a quasi-inverse. Since $\phi$ is a quasi-isometry, the set $\s{\gamma' \in \Gamma : \phi(\gamma') \in B(x_0,r)}$ is finite. Since $\phi^{-1}$ is a quasi-isometry, $\phi^{-1}(B(x_0,r))$ is also finite. It follows that the set $\s{\gamma \in \Gamma : \phi(\gamma\phi^{-1}(B(x_0,r))) \cap B(x_0,r) \ne \emptyset}$ must be finite, as desired.
\end{proof}

\section{Quasi-isometric rigidity and nilcentral extensions}
We are now ready to prove our main result which is quasi-isometric rigidity for products of the form $\Lambda \times L$, where $\Lambda$ is a non-uniform rank one lattice and $L$ is a nilpotent lattice.

\begin{theorem}
Let $X \ne \bH^2$ be a negatively curved symmetric space. Let $\Lambda$ be a non-uniform lattice in $\textup{Isom}(X)$ and $L$ be a nilpotent lattice. If $\Gamma$ is a finitely generated group quasi-isometric to $\Lambda \times L$, then there exist short exact sequences
    \begin{equation}
    \begin{tikzcd}
        1 \ar[r] & L' \ar[r] & \Gamma' \ar[r] & \Delta \ar[r] & 1,
        \tag{\ref{virtually nilcentral extension}}
    \end{tikzcd}
    \end{equation}
    \begin{equation*}
    \begin{tikzcd}
        1 \ar[r] & F \ar[r] & \Delta \ar[r] & \Lambda' \ar[r] & 1.
    \end{tikzcd}
    \end{equation*}
    where $\Gamma' \le \Gamma$ and $\Lambda' \le \Lambda$ have finite index, $L'$ is a nilpotent lattice quasi-isometric to $L$, $\Delta$ is a group, and $F$ is a finite group.
\end{theorem}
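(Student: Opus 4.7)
The theorem is really an assembly of the results proved in Sections 3--5. The three inputs I would combine are: the homomorphism $\theta \colon \Gamma \to \textup{Isom}(X)$ constructed in Section 3; Corollary \ref{commensurable}, which says that $\Lambda_\Gamma = \textup{im}\, \theta$ is a non-uniform rank one lattice commensurable with $\Lambda$; and Proposition \ref{kernel qi to N}, which says that $\ker \theta$ is finitely generated and quasi-isometric to $N$. The plan is to convert these facts into the required extensions by extracting an appropriate finite-index normal nilpotent subgroup from $\ker \theta$ and then pulling back a finite-index subgroup of $\Lambda_\Gamma$ through $\theta$.

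First, I would produce the nilpotent lattice $L'$. Since $\ker \theta$ is quasi-isometric to the simply connected nilpotent Lie group $N$, it has polynomial growth, so Gromov's polynomial growth theorem implies that $\ker \theta$ is virtually nilpotent. I would then extract a torsion-free nilpotent subgroup $L_0 \le \ker \theta$ of finite index in the standard way. By Malcev's theorem, $L_0$ embeds as a cocompact lattice in its Malcev completion, a simply connected nilpotent Lie group, so $L_0$ qualifies as a nilpotent lattice in the sense of this paper, and it is quasi-isometric to $\ker \theta$, hence to $N$ and to $L$. To arrange normality, replace $L_0$ with
\[
L' = \bigcap_{\gamma \in \Gamma} \gamma L_0 \gamma^{-1}.
\]
Because $\ker \theta$ is normal in $\Gamma$ and $L_0$ has finite index in $\ker \theta$, only finitely many distinct $\Gamma$-conjugates of $L_0$ appear inside $\ker \theta$, so $L'$ has finite index in $\ker \theta$. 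It remains torsion-free nilpotent (hence still a nilpotent lattice quasi-isometric to $L$) and is now normal in all of $\Gamma$.

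Finally, I would assemble the two short exact sequences. Using Corollary \ref{commensurable}, set $\Lambda' = \Lambda \cap \Lambda_\Gamma$, which has finite index in both $\Lambda$ and $\Lambda_\Gamma$, and put $\Gamma' = \theta^{-1}(\Lambda')$. Then $\Gamma'$ has finite index in $\Gamma$, contains $L'$, and $L' \triangleleft \Gamma'$, giving
\[
1 \to L' \to \Gamma' \to \Delta \to 1, \qquad \Delta \defeq \Gamma'/L'.
\]
The restriction $\theta|_{\Gamma'}$ surjects onto $\Lambda'$ with kernel $\ker \theta$, and since $L' \le \ker \theta$ this descends to a surjection $\Delta \twoheadrightarrow \Lambda'$ whose kernel is $F = \ker\theta / L'$. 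Because $L'$ has finite index in $\ker\theta$, $F$ is finite, yielding the second short exact sequence. There is no substantive geometric obstacle left at this stage: the hard work (the Kapovich--Kleiner--Leeb splitting, Schwartz's rigidity for neutered spaces, and the kernel analysis of Section 5) has already been done. The only subtlety is simultaneously achieving nilpotency of $L'$, normality of $L'$ inside a finite-index subgroup of $\Gamma$, and quasi-isometry of $L'$ to $L$, and the intersection-of-conjugates construction handles all three at once by exploiting the finiteness of the conjugacy class of $L_0$ in the normal subgroup $\ker \theta$.
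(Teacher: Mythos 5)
Your proposal takes essentially the same route as the paper: the same three inputs (the homomorphism $\theta$, commensurability of $\Lambda_\Gamma$ with $\Lambda$ from Corollary~\ref{commensurable}, and $\ker\theta$ being quasi-isometric to $N$ from Proposition~\ref{kernel qi to N}) are assembled in the same way, with Gromov's polynomial growth theorem, passage to a torsion-free nilpotent finite-index subgroup, Malcev's theorem, and pullback along $\theta$. The one small variation is how normality in $\Gamma$ is arranged: you intersect the $\Gamma$-conjugates of $L_0$, while the paper intersects all subgroups of $\ker\theta$ of the same index as $L_0$ to obtain a characteristic (hence $\Gamma$-normal) subgroup -- both work for the same underlying reason, namely that the finitely generated group $\ker\theta$ has only finitely many subgroups of any given finite index.
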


\begin{proof}
By Proposition \ref{kernel qi to N}, $\ker\theta$ is quasi-isometric to $N$, where $N$ is the simply connected nilpotent Lie group in which $L$ is a lattice. In particular $\ker\theta$ has polynomial growth. Then by Gromov's polynomial growth theorem, $\ker\theta$ is virtually nilpotent. So $\ker\theta$ has a finite-index, hence finitely generated, nilpotent subgroup $K$. Such groups are virtually torsion-free, so take a finite-index torsion-free subgroup $K' \le K$. Since $\ker\theta$ is finitely generated, it has only finitely many subgroups with the same index as $K'$. Let $L'$ denote their intersection. Then $L'$ is a finite-index characteristic subgroup of $\ker\theta$. Moreover, $L'$ is finitely generated, nilpotent, and torsion-free. A theorem of Malcev \cite{malcev} then says that $L'$ embeds as a lattice in a simply connected nilpotent Lie group $N'$, known as the (real) Malcev completion of $L'$ (see also \cite[Theorem 2.18]{raghunathan1972discrete}). Since $L'$ is characteristic in $\ker\theta$, it is normal in $\Gamma$. By construction, $\Gamma/L'$ is an extension of $\Lambda_\Gamma = \Gamma/\ker\theta$ by the finite group $F = \ker\theta/L'$. By Corollary \ref{commensurable}, $\Lambda_\Gamma$ is virtually a finite-index subgroup $\Lambda' \le \Lambda$ which is also a non-uniform lattice in $\text{Isom}(X)$. Let $\Gamma' = \theta^{-1}(\Lambda')$ and $\Delta = \Gamma'/L'$. Then we obtain the desired short exact sequences.

Since lattices in simply connected nilpotent Lie groups are uniform, $L, N, \ker\theta$, and $L'$ are all quasi-isometric to each other. The quotient $\Gamma'/L'$ is a finite extension of a finite-index subgroup of $\Lambda$, so $\Gamma'/L'$ and $\Lambda$ are virtually isomorphic, hence quasi-isometric, to each other.
\end{proof}

\begin{remark}
One may wonder whether the fact that $N$ and $N'$ are quasi-isometric determines any algebraic relation between them or between $L$ and $L'$. However, it is an open problem whether quasi-isometric simply connected nilpotent Lie groups are necessarily isomorphic.
\end{remark}

\begin{remark}
It is natural to wonder whether the short exact sequence (\ref{virtually nilcentral extension}) splits, perhaps after passing to further finite-index subgroups. Indeed, a conclusion such as this would strengthen even further the connection between the algebraic structure of $\Gamma$ and that of $\Lambda \times L$. However, it was shown in \cite{FLS} that the sequence does not in general virtually split when $\Lambda$ is the fundamental group of a complete finite-volume real hyperbolic $m$-manifold, $m\ge 3$, and $N = \bZ^d$.
\end{remark}

In the setting of Theorem \ref{FLS} where $L'=\bZ^d$, $\Gamma'$ may be chosen so that the group extension (\ref{virtually nilcentral extension}) is a central extension. In our more general setting where $L'$ is not necessarily abelian, it does not make sense to ask whether (\ref{virtually nilcentral extension}) is a central extension. Despite this, we are still able to derive some extra structure for the group extension (\ref{virtually nilcentral extension}), analogous to centrality, when we focus our attention to the case when $X$ is either quaternionic hyperbolic space or the Cayley hyperbolic plane. Our argument works for these specific symmetric spaces because we rely on a superrigidity theorem that applies when $\text{Isom}(X)$ has Kazhdan's property (T). Indeed, for those symmetric spaces, $\Isom{X}$ is either $\text{Sp}(n,1)$ or $\text{F}_{4(-20)}$, both of which are known to have property (T) by Kostant \cite{kostant} (see also \cite{BdlHV}). On the other hand, if $X$ is real or complex hyperbolic space, then $\Isom{X}$ is $\text{SO}(n,1)$ or $\text{SU}(n,1)$ and does not have property (T).

To state the superrigidity theorem, we first give a definition. Let $V$ be a finite-dimensional vector space over $\bR$ or $\bC$. Given a lattice $\Lambda$ in a locally compact group $G$ and a representation $\pi : \Lambda \to \text{GL}(V)$, we say that $\pi$ \textit{almost extends to a continuous representation of} $G$ if there exist representations $\pi_1 : G \to \text{GL}(V)$ and $\pi_2 : \Lambda \to \text{GL}(V)$ such that: $\pi_2$ has bounded image, the images of $\pi_1$ and $\pi_2$ commute, and $\pi(\gamma) = \pi_1(\gamma)\pi_2(\gamma)$ for all $\gamma \in G$.

\begin{theorem}\cite[Theorem 3.7]{fisher2012strengthening}\label{superrigidity}
Let $G$ be a semisimple Lie group with no compact factors and Kazhdan's property (T). Let $\Lambda$ be a lattice in $G$ and $\pi : \Lambda \to \textup{GL}(V)$ a finite-dimensional representation. Then $\pi$ almost extends to a continuous representation of $G$.
\end{theorem}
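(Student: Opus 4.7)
The plan is to establish this via the classical Margulis--Corlette superrigidity framework, with Kazhdan's property (T) used to absorb unipotent and compact corrections into the bounded factor $\pi_2$. First I would let $H \le \textup{GL}(V)$ be the Zariski closure of $\pi(\Lambda)$. After replacing $\Lambda$ by a finite-index subgroup (whose coset bookkeeping can be absorbed into the bounded correction at the end), I may assume $H$ is connected. Writing $H = L \ltimes U$ for its Levi decomposition, with unipotent radical $U$, and $L = L_c \cdot L_{nc}$ for the almost-direct decomposition of the reductive part into its compact and non-compact factors, the proof breaks into three steps: (i) trivialise the unipotent part $U$ up to a conjugation of $\pi$; (ii) continuously extend the projection to $L_{nc}$ to all of $G$; (iii) identify the projection to $L_c$ as a bounded representation $\pi_2$ commuting with the extension.

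For step (i), I would use that $\Lambda$ inherits Kazhdan's property (T) from $G$ and invoke the Delorme--Guichardet theorem: the projection $\Lambda \to U$, interpreted via a choice of Levi complement, is a $1$-cocycle of $\Lambda$ with coefficients in the finite-dimensional linear $\Lambda$-module $U$, and property (T) forces the class in $H^1(\Lambda,U)$ to vanish. Conjugating $\pi$ by the element of $U$ realising this coboundary, I may then assume $\pi(\Lambda) \subset L$. For step (ii), I would invoke superrigidity for the reductive target: in higher real rank this is Margulis' original superrigidity, and for the rank-one property (T) cases relevant to the sequel, namely $G = \textup{Sp}(n,1)$ and $G = \textup{F}_{4(-20)}$ (which are precisely the two cases appearing in Theorem \ref{Theorem B}), I would use Corlette's theorem via equivariant harmonic maps into the symmetric space of $L_{nc}$, driven by the quaternionic/octonionic Bochner formula. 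This yields a continuous homomorphism $\pi_1 : G \to L_{nc} \subset \textup{GL}(V)$ agreeing with the $L_{nc}$-component of $\pi$ on $\Lambda$. Step (iii) is then essentially free: set $\pi_2(\gamma)$ to be the $L_c$-component of $\pi(\gamma)$; boundedness is immediate from compactness of $L_c$, and commutation with $\pi_1$ follows from the fact that $L_c$ and $L_{nc}$ commute inside $L$.

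The hard part I anticipate is the interaction between steps (i) and (ii): the trivialising coboundary extracted from property (T) must be chosen so as to preserve the hypotheses of Margulis--Corlette on the reductive quotient, and the initial finite-index reduction must be undone so that the final factorisation $\pi = \pi_1 \pi_2$ holds globally on $\Lambda$ with commuting images. A secondary technical wrinkle is that $\pi$ need not a priori land in an algebraic subgroup, so before forming the Zariski closure I would replace $\textup{GL}(V)$ with the algebraic hull of $\pi(\Lambda)$ to ensure that the algebraic-group machinery underlying Margulis--Corlette applies cleanly; keeping track that this replacement is compatible with the bounded/commuting split at the end is the most delicate bookkeeping in the argument.
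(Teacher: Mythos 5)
The paper does not prove this statement; it is quoted verbatim from Fisher--Hitchman, so there is no in-paper proof to compare against. Evaluated on its own merits, your outline correctly identifies the overall shape of the argument (Zariski closure, Levi decomposition, kill the unipotent radical cohomologically, extend the noncompact reductive part by superrigidity, absorb the compact part as the bounded factor), but there is a genuine gap at its load-bearing step.

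The gap is in step (i). The Delorme--Guichardet theorem characterises property (T) by vanishing of $H^1(\Lambda,\mathcal{H})$ for \emph{unitary} representations $\mathcal{H}$. The coefficient module in your cocycle argument is (the Lie algebra of) the unipotent radical $U$, with $\Lambda$ acting by conjugation through the Levi factor $L$. If $L_{nc}$ is nontrivial, that action is not unitarizable, so property (T) via Delorme--Guichardet gives you nothing. Vanishing of $H^1(\Lambda,V)$ for non-unitary finite-dimensional $V$ is precisely the content that Fisher--Hitchman establish (in the $\textup{Sp}(n,1)$ and $\textup{F}_{4(-20)}$ cases) by their Matsushima/Bochner-formula method, and it is the reason the paper has the title it does; invoking classical property (T) here quietly assumes the hardest ingredient. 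In higher rank the analogous vanishing also does not come from Delorme--Guichardet but from Margulis-style arguments (or Raghunathan's finite-dimensional cohomology vanishing), so in no case is the step as cheap as you make it. A secondary issue: passing to a finite-index subgroup $\Lambda'$ to make $H$ connected and then ``absorbing the coset bookkeeping into $\pi_2$'' is not automatic. Given an almost-extension of $\pi|_{\Lambda'}$, the difference $\gamma\mapsto\pi(\gamma)\pi_1(\gamma)^{-1}$ on all of $\Lambda$ need not have bounded image nor commute with $\pi_1$ a priori; one has to argue via the $\Lambda$-conjugation action on $\Lambda'$ and uniqueness of the extension $\pi_1$, which is an actual lemma rather than bookkeeping.
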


We now give a precise definition of the extra structure we alluded to above. Let $G$ be a group with a nilpotent normal subgroup $N$. The upper central series of $N$ is 
\[
    1 = Z_0 \lhd Z_1 \lhd \dots \lhd Z_n = N
\]
where $Z_1 = Z(N)$ and $Z_{i+1}/Z_i = Z(N/Z_i)$. Recall that the $Z_i$ are characteristic subgroups of $N$, so in particular they are normal in $G$. We say that $N$ is a \textit{nilcentral} subgroup of $G$ if $Z(N/Z_i) \subseteq Z(G/Z_i)$ for each $i=0,\dots,n$. Observe that $N$ is nilcentral in $G$ if and only if $Z(N) \subseteq Z(G)$ and $N/Z(N)$ is nilcentral in $G/Z(N)$. An abelian subgroup is nilcentral if and only if it is central, and a nilpotent group $N$ is nilcentral in $N\times G$ for every group $G$. We now give an example of a group with a normal subgroup which is nilpotent but not nilcentral.

\begin{example}\label{non-nilcentral subgroup}
Let $N$ be the discrete Heisenberg group with group presentation 
\[
    N = \angles{a,b,c\ |\ [a,c], [b,c],
    [a,b]c^{-1}}. 
\]
Let $H = \bZ_2 = \s{\pm 1}$ act on $N$ via the action $\phi : H \to \text{Aut}(N)$ defined by
\[
    \phi_{-1}(a) = a, \quad
    \phi_{-1}(b) = b, \quad
    \phi_{-1}(c) = c^{-1}.
\]
Let $G = N \rtimes_\phi H$. Then $N$ is a nilpotent normal subgroup of $G$ but it is not nilcentral because $Z(N) \not\subseteq Z(G)$. Indeed,
\[
    (c,1)(a,-1) = (ca,-1) \qquad
    \text{and} \qquad
    (a,-1)(c,1) = (ac^{-1},-1),
\]
but $(ca,-1) = (ac^{-1},-1) = (c^{-1}a,-1)$ if and only if $c = c^{-1}$, which is not the case. Thus $c \in Z(N)$, but $(c,1) \notin Z(G)$.
\end{example}

Let
\begin{equation}\label{ses0}
\begin{tikzcd}
    1 \ar[r] & N \ar[r,"j"] & G \ar[r] & H \ar[r] & 1
\end{tikzcd}
\end{equation}
be a short exact sequence of groups, where $N$ is nilpotent. We say that (\ref{ses0}) is a \textit{nilcentral} extension if $j(N)$ is nilcentral in $G$. Note that an abelian extension is nilcentral if and only if it is central. Our theorem at the end of the section provides a family of non-trivial examples of nilpotent extensions that are nilcentral. On the other hand, if $\phi : H \to \text{Aut}(N)$ is as in Example \ref{non-nilcentral subgroup}, then
\[
\begin{tikzcd}
    1 \ar[r] & N \ar[r] & N \rtimes_\phi H \ar[r] & H \ar[r] & 1
\end{tikzcd}
\]
is a nilpotent extension which is not nilcentral.

We will see that in the situation of Theorem \ref{Theorem A} under some additional conditions, passing to a finite-index subgroup of $\Gamma'$ does yield a nilcentral extension. To this end, we make the following definition. We say that \eqref{ses0} is a \textit{virtually nilcentral} extension if $G$ has a finite-index subgroup $G'$ containing $j(N)$ such that $j(N)$ is nilcentral in $G'$. Recall that this means $Z(N/Z_i) \subseteq Z(G'/Z_i)$ for $i=0,\dots,n$, where $N$ is identified with $j(N) \subseteq G$. We now give a more tractable characterization of virtually nilcentral extensions.

\begin{proposition}\label{virtually nilcentral characterization}
The short exact sequence \eqref{ses0} is a virtually nilcentral extension if and only if for each $i=0,\dots,n$, $G/Z_i$ has a finite-index subgroup $K_i$ containing $N/Z_i$, such that $Z(N/Z_i) \subseteq Z(K_i)$.
\end{proposition}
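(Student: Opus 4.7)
The plan is to prove the two directions separately. Before starting, I note that each $Z_i$ is a characteristic subgroup of $N$, and since $N$ is normal in $G$, each $Z_i$ is itself normal in $G$; thus the quotients $G/Z_i$ are bona fide groups in which $N/Z_i$ embeds canonically. Similarly, any finite-index subgroup $G' \le G$ containing $N$ also contains every $Z_i$, so $G'/Z_i$ makes sense as a subgroup of $G/Z_i$.

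For the forward direction, the argument is essentially tautological. Given a finite-index $G' \le G$ containing $j(N)$ in which $N$ is nilcentral, I would simply set $K_i \defeq G'/Z_i \le G/Z_i$. This contains $N/Z_i$ since $N \subseteq G'$, has finite index in $G/Z_i$ since $[G:G'] < \infty$, and satisfies $Z(N/Z_i) \subseteq Z(K_i) = Z(G'/Z_i)$ by the very definition of $N$ being nilcentral in $G'$.

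For the converse, the key idea is to assemble a single finite-index subgroup from the collection $\{K_i\}$. Letting $\pi_i : G \to G/Z_i$ denote the quotient map, I would define
\[
    G' \defeq \bigcap_{i=0}^n \pi_i^{-1}(K_i).
\]
This is a finite intersection of finite-index subgroups of $G$, hence itself of finite index. It contains $N$ because for each $i$, $\pi_i(N) = N/Z_i \subseteq K_i$, so $N \subseteq \pi_i^{-1}(K_i)$. To verify nilcentrality of $N$ in $G'$, I would use the elementary observation that for any subgroup $H \le L$, one has $Z(L) \cap H \subseteq Z(H)$. Applying this with $L = K_i$ and $H = G'/Z_i$, where the inclusion $G'/Z_i \hookrightarrow K_i$ is valid because $G' \subseteq \pi_i^{-1}(K_i)$ and $Z_i \subseteq N \subseteq G'$, I would conclude that
\[
    Z(K_i) \cap (G'/Z_i) \subseteq Z(G'/Z_i).
\]
Since $Z(N/Z_i) \subseteq N/Z_i \subseteq G'/Z_i$ and $Z(N/Z_i) \subseteq Z(K_i)$ by hypothesis, this gives $Z(N/Z_i) \subseteq Z(G'/Z_i)$ for every $i$, which is exactly the condition that $N$ is nilcentral in $G'$.

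There is no significant obstacle in this argument; it is largely bookkeeping. The only delicate points are the normality and inclusion statements about the $Z_i$ gathered in the opening paragraph, and the trivial fact about centers of subgroups used at the end. The proof could be written in under half a page.
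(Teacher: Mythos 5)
Your proof is correct and follows essentially the same route as the paper's: in the forward direction take $K_i = G'/Z_i$, and in the reverse direction intersect the pullbacks $G_i = \pi_i^{-1}(K_i)$ to form a single finite-index subgroup $G'$ and then check nilcentrality via the containments $N/Z_i \subseteq G'/Z_i \subseteq K_i$.
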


\begin{proof}
The forward direction is done by taking $K_i = G'/Z_i$ for each $i$. For the reverse direction, recall that $K_i = G_i/Z_i$ for some $G_i \le G$. So $N \le G_i$ and $[G:G_i] = [G/Z_i : K_i] < \infty$. Let $G' = \bigcap G_i$. Since each $G_i$ has finite index, so does $G'$. For each $i$, $N/Z_i \subset G'/Z_i \subset K_i$, and since $Z(N/Z_i) \subset Z(K_i)$, we have $Z(N/Z_i) \subset Z(G'/Z_i)$.
\end{proof}

Now suppose we are in the situation of Theorem \ref{Theorem A} and we have the following two short exact sequences.
\begin{equation}\label{vne}
\begin{tikzcd}
    1 \ar[r] & N \ar[r] & \Gamma \ar[r] & \Delta \ar[r] & 1
\end{tikzcd}
\end{equation}
\begin{equation}\label{finite extension}
\begin{tikzcd}
    1 \ar[r] & F \ar[r] & \Delta \ar[r] & \Lambda \ar[r] & 1
\end{tikzcd}
\end{equation}
where $N$ is a nilpotent lattice (we are now using $N$ to denote the lattice, not the ambient Lie group), $F$ is finite, and $\Lambda$ is a non-uniform lattice in $\text{Isom}(X)$, where $X \ne \bH^2$ is a negatively curved symmetric space. Since $N$ is finitely generated, nilpotent, and torsion-free, $Z(N/Z_i) = Z_{i+1}/Z_i$ is a finitely generated free abelian group. That is, $Z(N/Z_i) = \bZ^{d_i}$ for some $d_i$, and so $\Sigma(N) = \max_i d_i$. We now focus on the situation when $X$ is quaternionic hyperbolic space or the Cayley hyperbolic plane, and $\text{Isom}(X)$ has sufficiently large dimension.

\begin{theorem}
Suppose $X$ is either quaternionic hyperbolic space or the Cayley hyperbolic plane. If $\dim \textup{Isom}(X) > \Sigma(N)$, then \eqref{vne} is a virtually nilcentral extension.
\end{theorem}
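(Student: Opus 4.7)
The plan is to verify the criterion of Proposition \ref{virtually nilcentral characterization}: for each $i = 0, \ldots, n$, one needs a finite-index subgroup $K_i \le \Gamma/Z_i$ containing $N/Z_i$ in which $Z(N/Z_i) = Z_{i+1}/Z_i \cong \bZ^{d_i}$ is central. Equivalently, the conjugation action
\[
\rho_i : \Gamma/Z_i \to \textup{Aut}(Z_{i+1}/Z_i) \cong \textup{GL}(d_i, \bZ)
\]
should have a finite-index kernel, i.e., finite image (after possibly passing to a finite-index subgroup); one can then take $K_i$ to be that kernel. The whole proof thus reduces to establishing this finiteness, uniformly in $i$.

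Because $Z_{i+1}/Z_i$ lies in the center of $N/Z_i$, the map $\rho_i$ is trivial on $N/Z_i$ and factors through $\Delta = \Gamma/N$. Using \eqref{finite extension} together with Selberg's lemma applied to $\Lambda$, I would pass to a finite-index subgroup of $\Delta$ isomorphic to a torsion-free finite-index non-uniform lattice $\Lambda' \le \Lambda$, producing a linear representation $\tilde\rho_i : \Lambda' \to \textup{GL}(d_i, \bZ) \hookrightarrow \textup{GL}(d_i, \bR)$. Since $G = \textup{Isom}(X)$ is either $\textup{Sp}(n,1)$ or $\textup{F}_{4(-20)}$, it is a simple Lie group with Kazhdan's property (T) by Kostant, so Theorem \ref{superrigidity} yields a decomposition $\tilde\rho_i = \pi_1 \cdot \pi_2$ with $\pi_1 : G \to \textup{GL}(d_i, \bR)$ continuous, $\pi_2 : \Lambda' \to \textup{GL}(d_i, \bR)$ of precompact image, and with commuting images.

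The decisive step is then to use $\dim G > \Sigma(N) \ge d_i$ to conclude that $\pi_1$ is the trivial representation. Because $G$ is simple, any nontrivial continuous $\pi_1$ has at most discrete kernel and hence realizes a quotient of $G$ as a simple Lie subgroup of $\textup{GL}(d_i, \bR)$ of dimension $\dim G$; I would rule this out by combining the dimension hypothesis with the specific representation theory of $\textup{Sp}(n,1)$ and $\textup{F}_{4(-20)}$, which constrains the minimal dimension of a faithful continuous real representation. Once $\pi_1$ is trivial, the representation $\tilde\rho_i = \pi_2$ has precompact image in $\textup{GL}(d_i, \bR)$, but this image also lies in the discrete subgroup $\textup{GL}(d_i, \bZ)$, and so must be finite. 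Pulling back through the successive passages to finite-index subgroups provides the required $K_i$, and Proposition \ref{virtually nilcentral characterization} then delivers the conclusion.

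I expect the principal obstacle to be exactly the triviality of $\pi_1$. The naive dimension estimate $\dim \pi_1(G) \le d_i^2$ from the ambient group $\textup{GL}(d_i, \bR)$ only gives $d_i^2 \ge \dim G$, which is much weaker than the hypothesis $d_i < \dim G$. Closing this gap seems to demand invoking known lower bounds on the dimensions of faithful real linear representations of $\textup{Sp}(n,1)$ and $\textup{F}_{4(-20)}$ and performing a short case analysis using the structure theory of these two rank-one simple Lie groups. A secondary, more routine issue is the careful handling of the finite kernel $F$ in the descent from $\Delta$ to the lattice $\Lambda'$, which I expect to be dispatched by standard residual-finiteness considerations.
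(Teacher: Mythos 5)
Your proposal tracks the paper's proof essentially step for step: reduce via Proposition \ref{virtually nilcentral characterization} to a finiteness statement for the conjugation representation on $Z_{i+1}/Z_i \cong \bZ^{d_i}$, observe that this representation kills $N/Z_i$ and hence factors through $\Delta$, use Selberg's lemma and the finiteness of $F$ to descend to a finite-index lattice in $\textup{Isom}(X)$, invoke property (T) and Theorem \ref{superrigidity} to write $\rho = \pi_1\pi_2$, argue $\pi_1$ is trivial, and conclude that $\pi_2$, having bounded image in the discrete group $\textup{GL}(d_i,\bZ)$, has finite image. All of this matches the paper, including the care with the finite kernel $F$.

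The one place where you deviate is that you explicitly flag the triviality of $\pi_1$ as an unresolved step, and you are right to be suspicious. The paper's own justification is the remark that $\dim\textup{Isom}(X) > \Sigma(N) \ge d$ ``so that the simple group $\textup{Isom}(X)$ has larger dimension than $\textup{GL}(d,\bR)$,'' but since $\dim\textup{GL}(d,\bR) = d^2$, the stated inequality $\dim\textup{Isom}(X) > d$ does not deliver $\dim\textup{Isom}(X) > d^2$. Your instinct that a sharper tool is needed is correct, but the repair you sketch (minimal faithful real representation dimensions of $\textup{Sp}(n,1)$ and $\textup{F}_{4(-20)}$) also does not close the gap under the stated hypothesis: since $\textup{Isom}(X)$ is simple and centerless, a nontrivial $\pi_1$ is injective, which forces $d_i \ge m(X)$, the minimal nontrivial real representation dimension of $\textup{Isom}(X)$; but $m(X)$ is on the order of $4(n+1)$ for $\textup{Sp}(n,1)$ and $26$ for $\textup{F}_{4(-20)}$, both far below $\dim\textup{Isom}(X) = 2n^2+5n+3$ and $52$ respectively, so there is a substantial range $m(X)\le d_i<\dim\textup{Isom}(X)$ compatible with the hypothesis where neither dimension count yields a contradiction. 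For the argument to go through as structured, one seems to need the stronger hypothesis $\Sigma(N) < m(X)$ (or $\dim\textup{Isom}(X) > \Sigma(N)^2$ for the na\"ive count), rather than $\dim\textup{Isom}(X) > \Sigma(N)$. So you have identified a genuine issue in this step; what remains unclear is whether the theorem as stated is salvageable by a different argument (say, exploiting that $\rho$ actually lands in $\textup{GL}(d,\bZ)$), or whether the hypothesis needs strengthening.
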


\begin{proof} 
Fix $i \in \s{0,\dots,n}$. By Proposition \ref{virtually nilcentral characterization} it suffices to show that $\Gamma/Z_i$ has a finite-index subgroup $K_i$ containing $N/Z_i$ in which $Z(N/Z_i)$ is central. Since $N/Z_i$ is normal in $\Gamma/Z_i$ and $Z(N/Z_i)$ is characteristic in $N/Z_i$, we have $Z(N/Z_i) \lhd \Gamma/Z_i$. So $\Gamma/Z_i$ acts by conjugation on $Z(N/Z_i)$, and we call this action $\rho_1$. If $\rho_1:\Gamma/Z_i \to \text{Aut}(Z(N/Z_i))$ has finite image, then we may take $K_i = \ker\rho_1$ to finish. Since $N/Z_i \le \ker\rho_1$, we may project $\rho_1$ to an action $\rho_2$ of $\Delta = (\Gamma/Z_i)/(N/Z_i)$ on $Z(N/Z_i)$. Then it suffices to show that $\rho_2 : \Delta \to \text{Aut}(Z(N/Z_i))$ has finite image because $\text{im }\rho_2 = \text{im }\rho_1$. 

Recall that $Z(N/Z_i) = \bZ^d$ for some $d \le r$. So $\text{Aut}(Z(N/Z_i)) = \text{GL}(d,\bZ)$, and the latter group naturally embeds into $\text{GL}(d,\bR)$. Composing $\rho_2$ with this inclusion yields a homomorphism $\rho_3 : \Delta \to \text{GL}(d,\bR)$, and it suffices to show that $\rho_3$ has bounded image. From \eqref{finite extension}, $\Delta$ is finitely generated because $F$ is finite and the lattice $\Lambda$ is finitely generated. Thus by Selberg's lemma, $\rho_3(\Delta)$ has a finite-index subgroup which is torsion-free. This subgroup has pre-image $\Delta' \le \Delta$ which also has finite index. Let $\rho_4 : \Delta' \to \text{GL}(d,\bR)$ be the restriction $\rho_3\vert_{\Delta'}$, and note that since $[\Delta : \Delta'] < \infty$, it suffices to show that $\rho_4$ has bounded image. Since $\rho_4(\Delta')$ is torsion-free and $F$ is finite, we have that $\Delta' \cap F \subset \ker\rho_4$. So $\rho_4$ projects to a homomorphism $\rho_5 : \Delta'/(\Delta'\cap F) \to \text{GL}(d,\bR)$. Since $\text{im}\ \rho_5 = \text{im}\ \rho_4$, it suffices to show that $\rho_5$ has bounded image. 

Now, $\Delta'/(\Delta' \cap F)$ is isomorphic to $(\Delta'F)/F$, and since $\Delta'$ has finite index in $\Delta$, $(\Delta'F)/F$ has finite index in $\Delta/F = \Lambda$. Thus, $\Delta'/(\Delta'\cap F)$ is a lattice in $\text{Isom}(X)$. The hypothesis on $X$ implies $\text{Isom}(X)$ has property (T). Hence, we may apply Theorem \ref{superrigidity} to $\rho_5$ to obtain representations $\pi_1 : \text{Isom}(X) \to \text{GL}(d,\bR)$ and $\pi_2 : \Delta'/(\Delta'\cap F) \to \text{GL}(d,\bR)$, where $\pi_2$ has bounded image, such that $\rho_5(\gamma) = \pi_1(\gamma)\pi_2(\gamma)$ for $\gamma \in \Delta'/(\Delta'\cap F)$. If $\dim \text{Isom}(X) > \Sigma(N) \ge d$, so that the simple group $\text{Isom}(X)$ has larger dimension than $\text{GL}(d,\bR)$, then $\pi_1$ must be trivial, in which case $\rho_5 = \pi_2$ has bounded image.
\end{proof}

\begin{remark}
    $\dim \text{Sp}(n,1) = 2n^2 + 5n + 3$ and $\dim \text{F}_{4(-20)} = 52$.
\end{remark}

\bibliographystyle{alpha}
\bibliography{references.bib}

@book{BdlHV,
    author = {Bekka, Bachir and de la Harpe, Pierre and Valette, Alain},
    title = {Kazhdan's Property (T)},
    publisher = {Cambridge University Press},
    year = {2008} 
}

@book{drutu-kapovich,
    author = {Dru\c{t}u, Cornelia and Kapovich, Michael},
    title = {Geometric Group Theory},
    publisher = {American Mathematical Society},
    year = {2018}
}

@article{farb,
    author = {Farb, Benson},
    title = {The quasi-isometry classification of lattices in semisimple Lie groups},
    journal = {Mathematical Research Letters},
    volume = {4},
    pages = {705-717},
    year = 1997
}

@article{fisher2012strengthening,
  title={Strengthening {K}azhdan’s Property ({T}) by {B}ochner methods},
  author={Fisher, David and Hitchman, Theron},
  journal={Geometriae Dedicata},
  volume={160},
  number={1},
  pages={333--364},
  year={2012},
  publisher={Springer}
}

@article{FLS,
    title={Rigidity of high dimensional graph manifolds},
    author={Frigerio, Roberto and Lafont, Jean-Fran{\c{c}}ois and Sisto, Alessandro},
    journal={Ast\'erisque},
    year={2015},
    publisher={Soci\'et\'e math\'ematique de France}
}

@article{Gromov-polynomial,
    AUTHOR = {Gromov, Mikhael},
     TITLE = {Groups of polynomial growth and expanding maps (with an appendix by {J}acques {T}its)},
   JOURNAL = {Inst. Hautes \'Etudes Sci. Publ. Math.},
  FJOURNAL = {Institut des Hautes \'Etudes Scientifiques. Publications
              Math\'ematiques},
    VOLUME = {53},
      YEAR = {1981},
     PAGES = {53--73},
      ISSN = {0073-8301,1618-1913},
   MRCLASS = {53C20 (22E40 58F15)},
  MRNUMBER = {623534},
MRREVIEWER = {J.\ A.\ Wolf},
       URL = {http://www.numdam.org/item?id=PMIHES_1981__53__53_0},
}

@inproceedings{gromov-icm,
    author = {Gromov, Mikhael},
    title = {Infinite groups as geometric objects},
    booktitle = {Proceedings of the ICM Warsaw},
    year = 1983 
}

@article{KKL,
  title={Quasi-isometries and the de {R}ham decomposition},
  author={Kapovich, Michael and Kleiner, Bruce and Leeb, Bernhard},
  journal={Topology},
  volume={37},
  number={6},
  pages={1193-1211},
  year={1998},
  publisher={Elsevier}
}

@article{kostant,
    author = {Kostant, Bertram},
    title = {On the existence and irreducibility of certain series of representations},
    journal = {Bull. Amer. Math. Soc.},
    volume = {75},
    pages = {627–642},
    year = {1969}
}

@article{malcev,
    author = {Malcev, Anatoly Ivanovich},
    title = {On a class of homogeneous spaces},
    journal = {Amer. Math. Soc. Translation},
    year = 1951
}

@book{raghunathan1972discrete,
  title={Discrete subgroups of Lie groups},
  author={Raghunathan, Madabusi Santanam},
  volume={3},
  year={1972},
  publisher={Springer}
}

@article{Schwartz,
  TITLE = {The quasi-isometry classification of rank one lattices},
  AUTHOR = {Schwartz, Richard Evan},
  JOURNAL = {Publications Math{\'e}matiques de l'Institut des Hautes {\'E}tudes Scientifiques},
  VOLUME = {82},
  NUMBER = {1},
  PAGES = {133--168},
  YEAR = {1995},
  PUBLISHER = {Springer}
}

@article{Selberg,
    author = {Selberg, Atle},
    title = {On discontinuous groups in higher-dimensional symmetric spaces},
    journal = {“Contributions to
Function Theory},
    year = 1960
}

@article{wolf1968growth,
  title={Growth of finitely generated solvable groups and curvature of {R}iemannian manifolds},
  author={Wolf, Joseph Albert},
  journal={Journal of differential Geometry},
  volume={2},
  number={4},
  pages={421--446},
  year={1968},
  publisher={Lehigh University}
}

\end{document}